\newtheorem{lem}{Lemma}
\newtheorem{prop}{Proposition}
\newcommand{\R}{{ R\hspace*{-1.5ex}\rule{0.15ex}{1.5ex}\hspace*{0.9ex}}}
\font\tenmsbm=msbm10 \textfont\msbmfam=\tenmsbm
\font\sevenmsbm=msbm7 \scriptfont\msbmfam=\sevenmsbm
\font\fivemsbm=msbm5 \scriptscriptfont\msbmfam=\fivemsbm
\def\msbm {\fam\msbmfam\tenmsbm}
\def \R {{\msbm R }} % definition pour les r?els
\def \N {{\msbm N}} % definition pour les entiers naturels
\begin{document}
\title{ On a regularization approach for solving the inverse Cauchy Stokes problem}
\author[a]{ A. Chakib}
\author[b]{ A. Nachaoui}
\author[a]{ M. Nachaoui}
\author[a]{H. Ouaissa\footnote{corresponding author: hamid04gmi@gmail.com}}
\affil[a]{Laboratoire de Math\'ematiques et Applications
Universit\'e Sultan Moulay Slimane, Facult\'e des Sciences et
Techniques, B.P.523, B\'eni-Mellal, Maroc.} \affil[b]{Laboratoire de
Math\'ematiques Jean Leray UMR6629 CNRS / Universit\'e de Nantes  2
rue de la Houssini\`ere, BP92208 44322 Nantes, France. }
\date{}
\maketitle
\begin{abstract}
In this paper, we are interested to an inverse Cauchy problem governed by Stokes equation, called the data completion problem. It consists in determining the unspecified fluid velocity, or one of its components over a part of its boundary, by introducing given measurements on its remaining part. As it's known,  this problem is one of highly ill-posed problem in the  Hadamard's sense \cite{had}, it is then an interesting challenge to carry out a numerical procedure for approximating their solutions, mostly in the particular case of noisy data. To solve this problem, we propose here a regularizing approach based on a coupled complex boundary method, originally proposed in \cite{source}, for solving an inverse source problem. We show the existence of the regularization optimization problem and  prove the convergence of subsequence of optimal solutions of Tikhonov regularization formulations  to the solution of the Cauchy problem. Then we suggest the numerical approximation of this problem using the adjoint gradient technic and the finite elements method of $P1-bubble/P1$ type. Finally, we provide some numerical results showing the accuracy, the effectiveness and robustness of the proposed approach.
\end{abstract}
\section{Introduction}
This paper reveals a computational method for solving the inverse Cauchy problem for Stokes equation. The Cauchy problem consists in recovering a missing boundary condition, usually contaminated with noise, from partial measurements of Dirichlet and Neumann conditions on an accessible part of the boundary. It is well know that the inverse Cauchy problem is ill-posed in the Hadmard's sense\cite{had}. This means that a small perturbation in given data may result to an arbitrary large errors in solution. This  aspect is especially important from a practical point of view, since in the realistic situations, boundary data come from measurements which contain noise. It is then an interesting challenge to carry out stable numerical approaches, mostly in the particular case of noisy data. More precisely, regularization methods should be employed, to obtain regularized stable and accurate numerical solution.

Over the last two decades a large body of literature were dedicated to seek for a stable and efficient methods which deals with the ill-posed behaviors of the Cauchy problem. So many research work have been conducted to propose different regularization approaches, we can not list all of them. The most popular one is Tikhonov's regularization method \cite{Tik,reg1,calvtikhonov}, which transforms the original ill-posed into a well posed problem by minimizing the $L^2$-norm of the solution subjected to the constraint equation. Then other methods have been proposed to regularize the Cauchy problem,% In this paper, we consider a Cauchy problem governed by  Stokes equation. In the Cauchy problem, one wants to recover the unknown Neumann and Dirichlet data on a part of the boundary from the measured Neumann and Dirichlet data, usually contaminated with noise, on the remaining part of the boundary. The Cauchy problem is an inverse problem with severe ill-posedness. For this problem, different regularization approaches have been suggested by various authors.
 we can mention for instance the alternating method \cite{hol,kozlov}, the universal method \cite{carl}, the quasi-reversibility method \cite{lionsquas,stab,qumeth}, the technic of fundamental solution \cite{fairmethod} and improved nonlocal boundary value problem method \cite{nl}, etc. Nevertheless, the literature devoted to the Cauchy problem for linear elliptic equations is very rich (see for example \cite{stab2,SR,CN, stabestim,haonon,regCauch} and the references therein).

 In this paper, we deal with a regularization method for solving an inverse Cauchy problem governed by Stokes equation, which consists in determining the unspecified fluid velocity, or one of its components over a part of its boundary, by introducing given measurements on its remaining part. The severe ill-posedness of this inverse Stokes problem lies in the fact that the solution's behavior hardly changes when there is slight change in the data. To overcome this server instability, we suggest a regularization coupled complex boundary method, originally proposed in \cite{source}, for solving an inverse source problem. This approach consists to include all data on the boundary, the known and unknown ones, in a complex Robin boundary on the whole boundary. the Cauchy problem is transferred then  into a complex Robin boundary problem of finding the unknown data such that the imaginary part of the solution equals zero in the domain. Then an optimization formulation of the problem is proposed and the Tikhonov regularization approach is performed to
resulting  optimization problem. Some theoretical analysis results on the coupled complex boundary method combined with Tikhonov regularization approach are given. More precisely, we show the existence of the regularization optimization problem, we identify it with respect to the solution of the adjoint state problem and  prove the convergence of subsequence of optimal solutions of Tikhonov regularization formulations  to the solution of the Cauchy problem.  Moreover, using the adjoint gradient technic, a simple solver is proposed to compute
the regularized solution. Thus, no iteration is needed and the resolution is fast. The finite-element method of $P_{1Bubble}/P_1$ type's \cite{finite} is used for the discretization. Numerical results are given to confirm that the proposed approach produces convergent and stable numerical solutions with respect to decreasing the amount of noise added into the input data.

This paper is organized as follows, in the first section, we begin by giving the setting of the inverse Cauchy problem and present its reformulation as an equivalent coupled complex boundary value  Cauchy problem of Robin condition, with null imaginary solution.  In section 3,  we  propose an optimization formulation of the coupled complex boundary value problem, then we suggest its regularization using the Tikhonov regularization framework for the Cauchy problem with noisy data. We show the existence of the resulting optimization problem, we identify it with respect to the solution of the adjoint state problem and  we prove the convergence of subsequence of optimal solutions of Tikhonov regularization formulations  to the solution of the Cauchy problem. In section 4, we present an algorithm for solving the regularized optimal solution of the optimization problem based on adjoint gradient technic, we propose then the approximation of the optimization problem using finite-element method $P_{1Bubble}/P_1$. Finally, we present some numerical results showing the effectiveness and the feasibility of the proposed approach.
\section{Setting and formulation of the problem}

Let $\Omega\subset\R^d$ (d=2,3) be an open bounded domain with Lipschitz boundary  $\Gamma:=\partial\Omega=\Gamma_1\cup\Gamma_0$, where $\Gamma_1\cap\Gamma_0=\emptyset $. Denote by $n$ the unit outward normal to $\Gamma$. For given functions $\mu$ $f$ defined in $\Omega$ and given Cauchy data $\psi$ and $\kappa$ defined $\Gamma_0$, we consider the following Cauchy problem governed by the Stokes equation, which consists in finding $(\varphi,\zeta)$ defined on $\Gamma_1$, solution of
%\begin{prob}\label{pb1.1}
%Given $\mu,f$ in $\Omega$ and Cauchy data $(\psi,\kappa)$ on $\Gamma_0$, find $(\varphi,\zeta)$ on $\Gamma_1$ such that
\begin{equation}
\label{eq1}
\left\lbrace
\begin{array}{llll}
-2\mu\,div(D(u))+\nabla p=f&in  &\Omega\\
div(u)=0 &on &\Omega\\
\sigma(u)n=\psi,\qquad u=\kappa & on & \Gamma_0\\
 \sigma(u)n=\varphi,\qquad u=\zeta & on &\Gamma_1\\
\end{array}\right.
\end{equation}
where $D(u)$ is the deformation tensor given by
$$D(u)=\frac{1}{2}(\nabla u+\nabla^t u)$$
and $\sigma(u)$ is the Cauchy  tensor given by
$$\sigma(u)=2\mu D(u)-p\,I,\quad I \mbox{ is the identity matrix},$$
%and $f$, $\psi$ and $\kappa$ are given functions.
The vector $u$ represents the velocity field of the fluid, the scalar function $p$ designates the associated pressure, $f$ being the force field acting on the system and $\mu>0$ is the kinematic viscosity coefficient.

   We note that the part of boundary $\Gamma_0$ is over-determined, since two boundary conditions of Dirichlet and Neumann type are imposed on it, while $\Gamma_1$ is the non-accessible part of the boundary, on which a missing boundary condition must be recovered. To do that, let us first give some useful notations and definitions. We denote by
   $$\Theta:=L^2(\Omega)^d,$$
   $$\Theta_0=\{v\in L^2(\Omega)\,\,\,/\,\,\,\int_{\Omega}v(x)\,dx=0\},$$
   $$\Theta_{\Gamma_0} := L^2(\Gamma_0)^d $$
   and
   $$\Theta_{\Gamma_1} := L^2(\Gamma_1)^d.$$
   $H^m(\Omega)^d$ (for $m\in \N$) will denote the Sobolev complex space equipped with the inner product
   $((\cdot,\cdot))_{m,\Omega,d}$ and the norm $||| \cdot |||_{m,\Omega}$ defined respectively  as follows:
   $$\displaystyle \forall u, v \in H^m(\Omega)^d \quad((u, v))_{m,\Omega,d} = \sum_{j=1}^{d}(u_j,\bar{v_j})_{m,\Omega}$$
   $$|||v|||_{m,\Omega,d} =\sqrt{((v, v))_{m,\Omega,d}}.$$
   We denote in particular by $V = H^1(\Omega)^d$ and we will use the following assumption
   $$f\in \Theta, \quad \psi\in \Theta_{\Gamma_0} \quad \mbox{and} \quad \kappa \in \Theta_{\Gamma_0}.$$
   In the following, we will denote by  $c$  a generic positif constant, which may have a different value at a different place.

In the sequel, we will propose a regularization method allowing us to obtain a stable approximate solution of the ill-posed Stokes inverse problem (\ref{eq1}). This is based on a coupled complex boundary method, originally proposed in \cite{source}, for solving an inverse source problem. For this, we will reformulate our inverse problem into a complex Cauchy problem.
%It is well-known that the problem \ref{pb1.1} is ill-posed problem \cite{had,belg}. Due to the severe ill-posedness of the Cauchy problem, we opte for the CCBM method as a regularization strategy to obtain a stable approximate Stokes equation solution.\\
%In the following we use the notation $\Theta:=L^2(\Omega).$ We take $T\in \Theta_{\Gamma_0} := L^2(\Gamma_0)^d$ and $t \in \Theta_{\Gamma_1} := L^2(\Gamma_1)^d.$  We denote by $W^{m,s}(\Omega)^d$ the standard Sobolev space with the norm $\|\|_{m,s,\Omega,d}$. Let $W^{0,s}(\Omega) := L^s(\Omega).$ In particular,
%$H^m(\Omega)^d$ represents $W^{m,2}(\Omega)^d$ with the corresponding inner product $(\cdot,\cdot)_{m,\Omega,d}$  and norm $\|\cdot\|_{m,G}.$ Let $H^m(\Omega)^d$ be the complex version of $H^m(\Omega)^d$ with the inner product $((\cdot,\cdot))_{m,\Omega,d}$ and norm $||| \cdot |||_{m,\Omega}$ defined as follows: $\displaystyle \forall u, v \in H^m(\Omega)^d \quad((u, v))_{m,\Omega,d} = \sum_{i=1}^{d}(u_i,\bar{v_i})_{m,\Omega}, |||v|||^2_{m,\Omega,d} =((v, v))_{m,\Omega,d}.$ In addition, we  denote $V = H^1(\Omega)^d.$ In the following, we can denotes by  c  a positif constant which may have a different value at a different place.

\subsection{A complex formulation of the Cauchy problem}
%Let $f\in \Theta$ and $\Gamma$ be Lipshitz continuous boundary, assume that $\mu\in L^{\infty}(\Omega).$
%We consider the complex boundary value problem:
The proposed formulation consists in combining the two given boundary conditions on $\Gamma_0$, to obtain a complex one of Robin type. We can then consider the complex boundary value problem: find $(\varphi,\zeta)$ defined on $\Gamma_1$, solution of
\begin{equation}\left\lbrace
\begin{array}{llll}
-2\mu\,div(D(u))+\nabla p=f&in  &\Omega\\
div(u)=0 &on &\Omega\\
\sigma(u)n+iu=\psi+i\kappa & on & \Gamma_0\\
 \sigma(u)n+iu=\varphi+i\zeta & on &\Gamma_1,\\
\end{array}\right.
\label{pb5}
\end{equation}
where $i$ is the imaginary  number.

It is clear that if $(\varphi,\zeta)$ is solution of $(\ref{eq1})$ then it is solution of $\ref{pb5}$.
Conversely, let  $(\varphi,\zeta)$ be a solution of  $\ref{pb5}$, then the associate solution $(u,p)$ can be written  $u=u_1+iu_2$ and $p=p_1+ip_2$, where $(u_1,p_1)$ and $(u_2,p_2)$  are respectively the real and imaginary parts of $(u,p)$, which are respectively solution of
\begin{equation}\left\lbrace
\begin{array}{llll}
-2\mu\,div(D(u_1))+\nabla p_1=f&in  &\Omega\\
div(u_1)=0 &on &\Omega\\
\sigma(u_1)n-u_2=\psi & on & \Gamma_0\\
 \sigma(u_1)n-u_2=\varphi & on &\Gamma_1\\
\end{array}\right.
\label{pb_6}
\end{equation}
\begin{equation}\left\lbrace
\begin{array}{llll}
-2\mu\,div(D(u_2))+\nabla p_2=0&in  &\Omega\\
div(u_2)=0 &on &\Omega\\
\sigma(u_2)n+u_1=\kappa & on & \Gamma_0\\
 \sigma(u_2)n+u_1=\zeta & on &\Gamma_1\\
\end{array}\right.
\label{pb7}
\end{equation}
So if $u_2=0$ and $p_2=0$ in $\Omega$, then  $u_2=0$ and $\sigma(u_2)n=0$ on $\Gamma$. Thus  from the boundary value problems (\ref{pb_6})-(\ref{pb7}), we get that $(\varphi,\zeta)$ is solution of $(\ref{eq1})$ associated to $(u_1,p_1)$.
We can then reformulate the Cauchy problem (\ref{eq1}) as follows : find $(\varphi,\zeta)$ defined on $\Gamma_1$ such that
\begin{equation}\left\lbrace
\begin{array}{l}
u_2=0,\,\, p_2=0 \,\,in \,\, \Omega\\
\mbox{where}\,\, (u_2,p_2) \mbox{ is the imaginary part of  } (u,p) \mbox{ the solution of the boundary value problem } (\ref{pb5}),\\
\end{array}\right.
\label{pb7-1}
\end{equation}
where  the weak formulation of (\ref{pb5})  is given by
\begin{equation}
\label{weak10}
\left\lbrace
\begin{array}{ll}
\mbox{find } (u,p)\in V\times \Theta_0&\\
a(u,v)+b(v,p)=F(\varphi,\zeta,v)& \forall v\in V \\
b(u,q)=0&\forall q\in \Theta_0
\end{array}
\right.
\end{equation}
and the forms $a$, $b$ and $F$ are defined respectively  as follows
$$a(u,v)=2\mu\int_{\Omega}D(u):D(\bar{v})dx+i\int_{\Omega}u\bar{v}ds\qquad \forall u,v\in V,$$
$$b(u,q)=-\int_{\Omega}q \,div(u)dx\qquad \forall (u,q)\in V\times \Theta_0$$
and
$$F(\varphi,\zeta,v)=\int_{\Omega}f\bar{v}dx+\int_{\Gamma_0}(\psi+i\kappa)\bar{v}ds+\int_{\Gamma_1}(\varphi+i\zeta)\bar{v}ds\qquad\forall v\in V$$
In order to solve the problem $(\ref{pb7-1})$, we will use a minimization formulation based on the Tikhonov regularization. This requires to show the well posedness of the  variational formulation (\ref{weak10}) of the boundary value problem (\ref{pb5}), for all given couple $(\varphi,\zeta)$. This result is stated in the following proposition.
\begin{prop}\label{pro3.1}
For given $f\in \Theta,$ $(\psi,\kappa)\in \Theta_{\Gamma_0}\times \Theta_{\Gamma_0}$ and $(\varphi,\zeta)\in \Theta_{\Gamma_1}\times \Theta_{\Gamma_1}$, the problem (\ref{weak10}) admits a unique solution $(u,p)\in V\times \Theta_0)$ which depends continuously on data. There exists then two constant $\alpha>0$  and $\beta>0$, such that
\begin{equation}
\label{11}
|||u|||_{1,\Omega}\leq \frac{c}{\alpha}(|||f|||_{0,\Omega}+|||\psi|||_{0,\Gamma_0}+|||\kappa|||_{0,\Gamma_0}+|||\varphi|||_{0,\Gamma_1}+|||\zeta|||_{0,\Gamma_0})
\end{equation}
 and
\begin{equation}
\label{11.5}
||p||_{0,\Omega}\leq \frac{c}{\beta}(|||f|||_{0,\Omega}+|||\psi|||_{0,\Gamma_0}+|||\kappa|||_{0,\Gamma_0}+|||\varphi|||_{0,\Gamma_1}+|||\zeta|||_{0,\Gamma_0})
\end{equation}
where $c$ designates the generic constant.
\end{prop}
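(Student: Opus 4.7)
The proof proceeds via the Babuška--Brezzi theory for mixed saddle-point problems applied to the sesquilinear formulation \eqref{weak10}. The plan is to verify successively (i) continuity of the forms $a$, $b$ and of the antilinear functional $F(\varphi,\zeta,\cdot)$, (ii) the $V$-coercivity of $a$, and (iii) the Brezzi inf--sup condition for $b$ on $V\times\Theta_0$. The estimates \eqref{11} and \eqref{11.5} will then follow from the abstract Brezzi a priori bound combined with the explicit control of $\|F(\varphi,\zeta,\cdot)\|_{V'}$ by the data.

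Continuity of $a$ and $b$ is immediate from Cauchy--Schwarz together with the boundedness of the trace operator $V\hookrightarrow L^2(\Gamma)^d$. Applying the same tools to the three integrals defining $F$ yields
$$|F(\varphi,\zeta,v)|\leq c\bigl(|||f|||_{0,\Omega}+|||\psi|||_{0,\Gamma_0}+|||\kappa|||_{0,\Gamma_0}+|||\varphi|||_{0,\Gamma_1}+|||\zeta|||_{0,\Gamma_1}\bigr)\,|||v|||_{1,\Omega},$$
which controls $\|F(\varphi,\zeta,\cdot)\|_{V'}$ by exactly the right-hand side appearing in \eqref{11}--\eqref{11.5}.

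The main obstacle is the coercivity of $a$ on the whole of $V=H^1(\Omega)^d$, since no essential Dirichlet condition is imposed and the usual Korn inequality is therefore not directly applicable. Testing with $v=u$ produces a complex number with non-negative real and imaginary parts:
$$\mathrm{Re}\,a(u,u)=2\mu\,|||D(u)|||_{0,\Omega}^2,\qquad \mathrm{Im}\,a(u,u)=|||u|||_{0,\Gamma}^2,$$
so that $|a(u,u)|\geq \tfrac{1}{\sqrt{2}}\bigl(2\mu\,|||D(u)|||_{0,\Omega}^2+|||u|||_{0,\Gamma}^2\bigr)$. I would then invoke the generalized Korn-type inequality
$$|||u|||_{1,\Omega}^2\leq C\bigl(|||D(u)|||_{0,\Omega}^2+|||u|||_{0,\Gamma}^2\bigr)\qquad \forall u\in V,$$
which follows by a Peetre--Tartar compactness argument combining the classical Korn inequality $|||u|||_{1,\Omega}^2\leq C(|||D(u)|||_{0,\Omega}^2+|||u|||_{0,\Omega}^2)$ with the fact that the only rigid motion with vanishing trace on the full boundary $\Gamma$ is the zero field. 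This delivers $|a(u,u)|\geq \alpha\,|||u|||_{1,\Omega}^2$ for a constant $\alpha>0$ depending only on $\mu$ and $\Omega$.

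For the inf--sup condition on $b$, the standard Stokes--Brezzi construction applies: given $q\in\Theta_0$, the zero-mean condition allows solving the Neumann problem $-\Delta\phi=q$ in $\Omega$, $\partial_n\phi=0$ on $\Gamma$ (well-posed modulo constants by elliptic regularity), and setting $v=-\nabla\phi\in V$ yields $b(v,q)=||q||_{0,\Omega}^2$ together with $|||v|||_{1,\Omega}\leq c\,||q||_{0,\Omega}$, which produces the inf--sup constant $\beta>0$. With the three ingredients above in hand, Brezzi's theorem yields existence and uniqueness of $(u,p)\in V\times\Theta_0$ as well as the a priori bound $\alpha\,|||u|||_{1,\Omega}+\beta\,||p||_{0,\Omega}\leq c\,\|F(\varphi,\zeta,\cdot)\|_{V'}$, from which \eqref{11} and \eqref{11.5} follow directly by inserting the continuity estimate on $F$.
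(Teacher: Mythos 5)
Your proposal is correct and follows the same Babu\v{s}ka--Brezzi skeleton as the paper (continuity of $a$, $b$ and $F$; coercivity of $a$; inf--sup for $b$; then the abstract a priori bound), but it genuinely diverges at the one step that carries the real content: the coercivity of $a$ on $V=H^1(\Omega)^d$. The paper asserts $\mathrm{Re}\,a(u,u)=2\mu\,|||D(u)|||^2_{0,\Omega}\geq c\,|||u|||^2_{1,\Omega}$ ``due to Korn's inequality''; as written this cannot hold, since no essential boundary condition is imposed on $V$ and any rigid motion $u$ satisfies $D(u)=0$ while $|||u|||_{1,\Omega}\neq 0$. You diagnose exactly this and repair it by estimating the modulus $|a(u,u)|$, which recovers the boundary term $|||u|||^2_{0,\Gamma}$ coming from the imaginary (Robin) part of the form, and then invoking the generalized Korn inequality $|||u|||^2_{1,\Omega}\leq C\bigl(|||D(u)|||^2_{0,\Omega}+|||u|||^2_{0,\Gamma}\bigr)$ via a Peetre--Tartar compactness argument, the point being that the only rigid motion with vanishing trace on $\Gamma$ is zero. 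This is the correct argument, and it shows that the coercivity genuinely depends on the complex Robin coupling --- which is the whole point of the method and is invisible in the paper's one-line version of this step. Two small remarks on your inf--sup construction: with $v=-\nabla\phi$ and $-\Delta\phi=q$ you get $\mathrm{div}(v)=q$ and hence $b(v,q)=-||q||^2_{0,\Omega}$, so you should take $v=+\nabla\phi$; and placing $\nabla\phi$ in $H^1(\Omega)^d$ requires $H^2$ elliptic regularity for the Neumann problem, which may fail on a general Lipschitz domain --- the robust route is the Ne\v{c}as/Bogovskii surjectivity of $\mathrm{div}:H^1_0(\Omega)^d\to\Theta_0$ with a bounded right inverse. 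Since the paper merely cites a reference for the inf--sup condition, this does not put you behind it; on the coercivity step your argument is the more complete of the two.
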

\begin{proof}
The proof of the existence and uniqueness of the solution of (\ref{weak10}), follows from the continuity of $a$ on $V\times V$, its coercivity on $V$, the continuity of $F$ on $V$ and the inf-sup condition on $b$  (see for example \cite{infsup}). Indeed
For any $u,v\in V,$ by using the H\"older inequality and the continuity of the trace  operator from $V$ to $\Theta_{\Gamma_0}$ and $\Theta_{\Gamma_1}$, we get the continuity of $a(\cdot,\cdot)$ and $F(\varphi,\zeta,\cdot)$:
\begin{equation}
|a(u,v)|\leq c|||u|||_{1,\Omega}|||v|||_{1,\Omega}
\label{12}
\end{equation}
\begin{equation}
\label{13}
|F(\varphi,\zeta,v)|\leq c(|||f|||_{0,\Omega}+|||\psi|||_{0,\Gamma_0}+|||\kappa|||_{0,\Gamma_0}+|||\varphi|||_{0,\Gamma_1}+|||\zeta|||_{0,\Gamma_1})|||v|||_{1,\Omega}
\end{equation}
Moreover, the coercivity of $a$ is obtained due to the Korn's inequality, there exists then $\alpha >0$, such that:
\begin{equation}
\label{14}
Re(a(u,u))=2\mu|||D(u)|||^2_{0,\Omega}\geq c |||u|||^2_{1,\Omega},  \quad \forall u\in V
\end{equation}
To conclude, using the same technics as in \cite{infsup}, we prove the inf-sup condition on $b$, which means that there exists a constant $\beta >0$
\begin{equation}
\label{15.1}
\inf_{q\in \Theta,q\neq 0} \sup_{v\in V,v\neq 0}\frac{b(v,q)}{|||v|||_{1,\Omega}\|q\|_{L^2(\Omega)}} \geq \beta
\end{equation}
%%-------
Therefore, the problem (\ref{weak10}) admits a unique solution $(u,p)\in V\times \Theta_0.$

The estimations (\ref{11}) and (\ref{11.5})  follow then directly from  the use of the inequalities (\ref{13}),(\ref{14}) and (\ref{15.1}).
\end{proof}

\section{Tikhonov regularization approach and theoretical analysis}
In this section, we will present a minimization approach based on Tikhonov regularization  for solving the problem $(\ref{pb7-1})$. Then we show the existence of the optimal solution  and  prove the convergence of subsequence of optimal solution of Tikhonov regularization formulation to the solution of the Cauchy problem (\ref{eq1}).
\subsection{Tikhonov regularization approach of the problem}
Using the proposed formulation $(\ref{pb7-1})$  of the Cauchy inverse problem (\ref{eq1}), we can suggest a Tikhonov regularization approach for the Cauchy problem with noisy Cauchy data. So let us consider the Cauchy data $(\psi,\kappa)$ containing the random noise with a known level $\delta$, denoted as $(\psi^\delta,\kappa^\delta),$ such that
$$
|||\psi^\delta-\psi|||_{0,\Gamma_0}\leq\delta, \qquad
|||\kappa^\delta-\kappa|||_{0,\Gamma_0}\leq \delta
$$
Then the boundary value problem (\ref{pb5}) associated to noisy Cauchy data can be read
\begin{equation}
\label{15}\left\lbrace
\begin{array}{llll}
-2\mu\,div(D(u^\delta))+\nabla p^\delta=f& in  &\Omega\\
div(u^\delta)=0 &on &\Omega\\
\sigma(u^\delta)n+iu^\delta=\psi^\delta+i\kappa^\delta & on & \Gamma_0\\
 \sigma(u^\delta)n+iu^\delta=\varphi+i\zeta & on &\Gamma_1\\
\end{array}\right.
\end{equation}
and its  weak formulation can be written
\begin{equation}
\label{weak15}
\left\lbrace
\begin{array}{ll}
\mbox{find} (u^\delta,p^\delta)\in V\times \Theta_0&\\
a(u^\delta,v)+b(v,p^\delta)=F^\delta(\varphi,\zeta,v)& \forall v\in V \\
b(u^\delta,q)=0&\forall q\in \Theta_0
\end{array}
\right.
\end{equation}
where
\begin{equation}
\label{16}
F^\delta(\varphi,\zeta,v)=\int_{\Omega}f\bar{v}dx+\int_{\Gamma_0}(\psi^\delta+i\kappa^\delta)\bar{v}ds+\int_{\Gamma_1}(\varphi+i\zeta)\bar{v}ds\qquad\forall v\in V
\end{equation}
 It is clear that the well-posedness  of the problem  (\ref{weak15}) follows from proposition~\ref{pro3.1},  indeed, we have the following result
\begin{prop}\label{pro3.1}
For given $f\in \Theta,$ $(\psi^\delta,\kappa^\delta)\in \Theta_{\Gamma_0}\times \Theta_{\Gamma_0}$ and $(\varphi,\zeta)\in \Theta_{\Gamma_1}\times \Theta_{\Gamma_1}$, the problem (\ref{weak15}) has a unique solution $(u^\delta, p^\delta)\in V\times \Theta_0$ which depends continuously on  data. Moreover
\begin{equation}
\label{18}
|||u^\delta|||_{1,\Omega}\leq \frac{c}{\alpha}(|||f|||_{0,\Omega}+|||\psi^\delta|||_{0,\Gamma_0}+|||\kappa^\delta|||_{0,\Gamma_0}+|||\varphi|||_{0,\Gamma_1}+|||\zeta|||_{0,\Gamma_0})
\end{equation}
\begin{equation}
\label{18.5}
||p^\delta||_{0,\Omega}\leq \frac{c}{\beta}(|||f|||_{0,\Omega}+|||\psi^\delta|||_{0,\Gamma_0}+|||\kappa^\delta|||_{0,\Gamma_0}+|||\varphi|||_{0,\Gamma_1}+|||\zeta|||_{0,\Gamma_0})
\end{equation}
\end{prop}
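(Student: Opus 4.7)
The plan is to observe that the weak problem (\ref{weak15}) has exactly the same structure as (\ref{weak10}): the sesquilinear forms $a(\cdot,\cdot)$ and $b(\cdot,\cdot)$ are unchanged, and only the antilinear functional on the right-hand side has been replaced by $F^\delta(\varphi,\zeta,\cdot)$, in which $\psi$ and $\kappa$ are swapped for $\psi^\delta$ and $\kappa^\delta$. Since by hypothesis $\psi^\delta,\kappa^\delta\in\Theta_{\Gamma_0}$, the data of the perturbed problem sit in exactly the same functional spaces as those of (\ref{weak10}), so the whole argument of the preceding proposition can be re-run verbatim.

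Concretely, the first step is to verify that $F^\delta(\varphi,\zeta,\cdot)$ is continuous on $V$ with
\[
|F^\delta(\varphi,\zeta,v)|\leq c\bigl(|||f|||_{0,\Omega}+|||\psi^\delta|||_{0,\Gamma_0}+|||\kappa^\delta|||_{0,\Gamma_0}+|||\varphi|||_{0,\Gamma_1}+|||\zeta|||_{0,\Gamma_1}\bigr)|||v|||_{1,\Omega},
\]
obtained from H\"older's inequality and the continuity of the trace operator from $V$ into $\Theta_{\Gamma_0}$ and $\Theta_{\Gamma_1}$, exactly as in (\ref{13}). Combining this bound with the continuity (\ref{12}) and coercivity (\ref{14}) of $a$ and with the inf-sup condition (\ref{15.1}) on $b$, the classical mixed-problem theory invoked through \cite{infsup} yields existence and uniqueness of $(u^\delta,p^\delta)\in V\times\Theta_0$.

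For the velocity estimate (\ref{18}) the plan is to test the first equation of (\ref{weak15}) against $v=u^\delta$, take the real part so that the contribution $b(u^\delta,p^\delta)$ is handled via the incompressibility constraint $b(u^\delta,q)=0$, and then apply (\ref{14}) together with the bound on $F^\delta$ just displayed. The pressure estimate (\ref{18.5}) is then obtained from the inf-sup condition (\ref{15.1}) written in the form $b(v,p^\delta)=F^\delta(\varphi,\zeta,v)-a(u^\delta,v)$, after substituting (\ref{18}) to control the $a(u^\delta,v)$ term.

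No genuine obstacle arises here: the statement is a direct corollary of Proposition~\ref{pro3.1} obtained by the symbolic substitution $(\psi,\kappa)\mapsto(\psi^\delta,\kappa^\delta)$. The only point worth emphasising is that the constants $\alpha$, $\beta$ and the generic $c$ are independent of $\delta$, so the stability bounds are uniform with respect to the noise level and will be usable later when passing to the limit $\delta\to 0$ in the Tikhonov convergence analysis.
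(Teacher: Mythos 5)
Your proposal is correct and matches the paper's approach: the paper gives no separate proof, simply noting that well-posedness of (\ref{weak15}) follows from the preceding proposition since only the right-hand side functional changes, with $(\psi,\kappa)$ replaced by $(\psi^\delta,\kappa^\delta)$ in the same spaces. Your additional remark that the constants are independent of $\delta$ is a useful observation consistent with how the estimates are used later in the convergence analysis.
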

The error estimations between  the solutions of (\ref{weak10}) and (\ref{weak15}) with respect to $\delta$ is given by
\begin{lem}
Let $u$ and $u^\delta$ be the respective solution of (\ref{weak10}) and (\ref{weak15}), then we have
\begin{equation}
 \label{19}
 |||u^\delta-u|||_{1,\Omega}\leq c\delta
 \end{equation}
and
\begin{equation}
\label{press}
\|p^\delta-p\|_{0,\Omega}\leq c\delta
\end{equation}
\end{lem}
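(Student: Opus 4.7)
The plan is to exploit the linearity of the Stokes system and reduce the estimate for the difference $(u^\delta - u, p^\delta - p)$ to a direct application of the stability estimates proved in Proposition~\ref{pro3.1}. First I would set $w = u^\delta - u \in V$ and $r = p^\delta - p \in \Theta_0$ and subtract the weak formulation (\ref{weak10}) from (\ref{weak15}). Using the linearity of $a(\cdot,\cdot)$ and $b(\cdot,\cdot)$ in their first arguments, the pair $(w,r)$ satisfies
\begin{equation*}
\left\{
\begin{array}{ll}
a(w,v) + b(v,r) = F^\delta(\varphi,\zeta,v) - F(\varphi,\zeta,v) & \forall v \in V,\\
b(w,q) = 0 & \forall q \in \Theta_0.
\end{array}
\right.
\end{equation*}
From the expressions of $F$ and $F^\delta$, the contributions from $f$ and from the boundary $\Gamma_1$ cancel, and we are left with
\begin{equation*}
F^\delta(\varphi,\zeta,v) - F(\varphi,\zeta,v) = \int_{\Gamma_0}\bigl((\psi^\delta-\psi) + i(\kappa^\delta-\kappa)\bigr)\bar v \, ds.
\end{equation*}

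Thus $(w,r)$ solves a problem of exactly the same form as (\ref{weak10}), but with zero source term $f$, zero Cauchy data on $\Gamma_1$, and perturbed Cauchy data $(\psi^\delta - \psi, \kappa^\delta - \kappa)$ on $\Gamma_0$. Applying Proposition~\ref{pro3.1} to this difference problem gives
\begin{equation*}
|||w|||_{1,\Omega} \leq \frac{c}{\alpha}\bigl(|||\psi^\delta-\psi|||_{0,\Gamma_0} + |||\kappa^\delta-\kappa|||_{0,\Gamma_0}\bigr),
\end{equation*}
and similarly for the pressure component using the inf-sup bound (\ref{15.1}),
\begin{equation*}
\|r\|_{0,\Omega} \leq \frac{c}{\beta}\bigl(|||\psi^\delta-\psi|||_{0,\Gamma_0} + |||\kappa^\delta-\kappa|||_{0,\Gamma_0}\bigr).
\end{equation*}

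Finally, I would invoke the noise bounds $|||\psi^\delta-\psi|||_{0,\Gamma_0} \leq \delta$ and $|||\kappa^\delta-\kappa|||_{0,\Gamma_0} \leq \delta$ to obtain (\ref{19}) and (\ref{press}), absorbing the constants $1/\alpha$ and $1/\beta$ into the generic constant $c$. There is no real obstacle here since the argument is a straightforward subtraction-and-stability argument; the only point requiring mild care is checking that $r = p^\delta - p$ still lies in $\Theta_0$ (which follows because both $p$ and $p^\delta$ have zero mean) so that the inf-sup bound from Proposition~\ref{pro3.1} is applicable to the difference problem.
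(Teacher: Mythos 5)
Your proposal is correct and follows essentially the same route as the paper: subtract the two weak formulations so that the difference $(u^\delta-u,\,p^\delta-p)$ solves a Stokes problem driven only by the data perturbation on $\Gamma_0$, then bound it by the stability estimates (coercivity of $a$ plus the inf-sup condition on $b$) and the noise bounds $|||\psi^\delta-\psi|||_{0,\Gamma_0}\leq\delta$, $|||\kappa^\delta-\kappa|||_{0,\Gamma_0}\leq\delta$. The only cosmetic difference is that you invoke Proposition~\ref{pro3.1} applied to the difference problem, whereas the paper redoes the coercivity/inf-sup computation inline with the test function $v=u^\delta-u$; your remark that $p^\delta-p$ remains in $\Theta_0$ is a correct and worthwhile check.
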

\begin{proof}
By subtracting the weak formulations (\ref{weak10}) and (\ref{weak15}), and putting $v=u^\delta-u$,   from the coercivity of $a$, the inf-sup condition and the continuity of $F,$ we get
\begin{equation}
\alpha|||u^\delta-u|||_{1,\Omega}^2+\beta |||u^\delta-u|||_{1,\Omega}\|p^\delta-p\|_{0,\Omega}\leq (|||\psi^\delta-\psi|||_{0,\Gamma_0}+|||\kappa^\delta-\kappa|||_{0,\Gamma_0})|||u^\delta-u|||_{1,\Omega},
\end{equation}
which end the proof.
\end{proof}
Now, for any $(\varphi,\zeta) \in \Theta_{\Gamma_1}\times \Theta_{\Gamma_1}$, we can write the solution $u^\delta(\varphi,\zeta)\in V$ of (\ref{weak15}), as follows
$$u^\delta(\varphi,\zeta)= u_1^\delta(\varphi,\zeta) + i\,u_2^\delta(\varphi,\zeta)$$
Define then the coast functional
\begin{equation}
J_\varepsilon^\delta(\varphi,\zeta) = \frac{1}{2}|||u_2^\delta(\varphi, \zeta)|||^2_{0,\Omega}+ \frac{1}{2}\|p_2^\delta(\varphi, \zeta)\|^2_{0,\Omega}+ \frac{\varepsilon}{2}|||\varphi|||^2_{0,\Gamma_1}+\frac{\varepsilon}{2}|||\zeta|||^2_{0,\Gamma_1}
\label{20}
\end{equation}
and introduce the following Tikhonov regularization approach for solving the problem (\ref{pb7-1})
\begin{equation}\left\lbrace
\begin{array}{l}
\mbox{find } \,\,(\varphi_\varepsilon^\delta,\zeta_\varepsilon^\delta) \in \Theta_{\Gamma_1}\times \Theta_{\Gamma_1}\,\, \mbox{ such that}\\
J_\varepsilon^\delta(\varphi_\varepsilon^\delta,\zeta_\varepsilon^\delta)  = \displaystyle\inf_{(\eta,s)\in  \Theta_{\Gamma_1}\times \Theta_{\Gamma_1}}
J_\varepsilon^\delta(\eta,s)
\end{array}\right.
\label{pb3.2}
\end{equation}
%\begin{prob} \label{pb3.2}
%Find $(\varphi_\varepsilon^\delta,\zeta_\varepsilon^\delta) \in \Theta_{\Gamma_1}\times \Theta_{\Gamma_1}$ such that
%$$J_\varepsilon^\delta(\varphi_\varepsilon^\delta,\zeta_\varepsilon^\delta)  = \inf_{(\eta,s)\in  \Theta_{\Gamma_1}\times \Theta_{\Gamma_1}}
%J_\varepsilon^\delta(\eta,s)$$
%\end{prob}
\subsection{Existence and convergence results}
In this section, we show first the existence of the optimal solution of $(\ref{pb3.2})$ and we identify it with respect to the solution of the adjoint state problem. Then we prove the convergence of subsequence of optimal solution of Tikhonov regularization formulation $(\ref{pb3.2})$ to the solution of the Cauchy problem (\ref{eq1}). For this, we will need the following lemmas. In the first lemma, we show the existence of the optimal solution of $(\ref{pb3.2})$.
\begin{lem}
\label{lemexist}
The problem (\ref{pb3.2}) admits a unique solution
\end{lem}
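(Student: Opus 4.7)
The plan is to view $J^\delta_\varepsilon$ as a strictly convex, continuous, coercive functional on the Hilbert space $\Theta_{\Gamma_1}\times\Theta_{\Gamma_1}$ and invoke the standard existence-uniqueness theorem for minimizers of such functionals. Coercivity is immediate because $\varepsilon>0$: one has
$$J^\delta_\varepsilon(\varphi,\zeta)\geq \frac{\varepsilon}{2}\bigl(|||\varphi|||^2_{0,\Gamma_1}+|||\zeta|||^2_{0,\Gamma_1}\bigr),$$
which tends to $+\infty$ as $|||\varphi|||_{0,\Gamma_1}+|||\zeta|||_{0,\Gamma_1}\to\infty$, so any minimizing sequence $(\varphi_n,\zeta_n)$ is bounded in $\Theta_{\Gamma_1}\times\Theta_{\Gamma_1}$ and admits a weakly convergent subsequence.

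For convexity and continuity, I would first apply Proposition~\ref{pro3.1} to the difference of two solutions of (\ref{weak15}) associated to two data pairs sharing the same $f$, $\psi^\delta$, $\kappa^\delta$: that difference solves the same linear system with vanishing $f$ and vanishing $\Gamma_0$-data, so the estimates (\ref{18})--(\ref{18.5}) show that the solution operator $S:(\varphi,\zeta)\mapsto (u^\delta(\varphi,\zeta),p^\delta(\varphi,\zeta))$ is Lipschitz affine from $\Theta_{\Gamma_1}\times\Theta_{\Gamma_1}$ into $V\times\Theta_0$. Taking imaginary parts and composing with the continuous embedding $V\hookrightarrow\Theta$ preserve affineness and continuity, so the data-misfit $\frac{1}{2}|||u_2^\delta|||^2_{0,\Omega}+\frac{1}{2}\|p_2^\delta\|^2_{0,\Omega}$ is a convex continuous functional of $(\varphi,\zeta)$, being a squared Hilbert norm composed with an affine map. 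Adding the two convex continuous regularization terms preserves convexity and continuity of $J^\delta_\varepsilon$.

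Strict convexity comes from the penalty alone: $\frac{\varepsilon}{2}(|||\varphi|||^2_{0,\Gamma_1}+|||\zeta|||^2_{0,\Gamma_1})$ is strictly convex because the squared norm in a Hilbert space is strictly convex and $\varepsilon>0$; summing with the (convex) data-misfit preserves strict convexity of $J^\delta_\varepsilon$. A strictly convex, continuous, coercive functional on a Hilbert space attains its unique minimum -- concretely, the weakly convergent subsequence extracted from any minimizing sequence has a weak limit $(\varphi^*,\zeta^*)$ that realizes the infimum by weak lower semicontinuity of convex continuous functionals, and strict convexity forces this limit to be unique. The only delicate point in the argument is the continuity of $S$, but this reduces directly to the a priori estimates of Proposition~\ref{pro3.1} applied to the linear difference and is therefore not a real obstacle.
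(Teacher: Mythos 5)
Your proof is correct, and it follows the same overall strategy as the paper (the direct method of the calculus of variations for existence, plus strict convexity of the Tikhonov penalty for uniqueness), but it establishes the key lower-semicontinuity step by a different mechanism. The paper extracts weakly convergent subsequences of the states $(u^n,p^n)$ using the a priori bounds of Proposition~\ref{pro3.1}, passes to the limit in the variational equations to identify the weak limit as the state associated with $(\tilde\varphi,\tilde\zeta)$, and only then applies weak lower semicontinuity of the $L^2$ norms. You instead observe that the solution operator $S:(\varphi,\zeta)\mapsto(u^\delta,p^\delta)$ is Lipschitz affine (which follows from linearity of (\ref{weak15}) in the boundary data together with the stability estimates applied to differences), so that the reduced functional $J^\delta_\varepsilon$ is convex and continuous, hence weakly lower semicontinuous, and the conclusion follows from the standard theorem on minimization of coercive, strictly convex, continuous functionals on a Hilbert space. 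Your route is slightly cleaner in that it avoids handling the state sequences explicitly, and the affine structure you exploit is exactly the identity the paper proves later when computing the gradient of $J^\delta_\varepsilon$; the paper's route is more self-contained in that it does not presuppose the convexity of the composed functional but verifies the limit passage by hand. Both arguments rest on the same two ingredients, namely the well-posedness and stability of the state problem and the strict convexity supplied by the $\varepsilon$-terms, so there is no gap in either.
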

\begin{proof}
Let $\left(\varphi^n,\zeta^n\right)_n \in \Theta_{\Gamma_1}\times\Theta_{\Gamma_1}$ be a  minimizing sequence of $J^\delta_\varepsilon$ in $\Theta_{\Gamma_1}\times\Theta_{\Gamma_1}$, i.e.
\begin{equation}
\displaystyle \lim_{n\rightarrow +\infty} J^\delta_\varepsilon(\varphi^n,\zeta^n)=\inf_{(\eta,s)\in\Theta_{\Gamma_1}\times\Theta_{\Gamma_1}}J^\delta_\varepsilon(\eta,s)
\end{equation}
Since $J^\delta_\varepsilon$ is coercive, which means that $$\displaystyle\lim\limits_{|||(\varphi,\zeta)|||_{\Theta_{\Gamma_1}\times\Theta_{\Gamma_1}}\rightarrow+\infty}J^\delta_\varepsilon(\varphi,\zeta)=+\infty.$$
Thus $\left((\varphi^n,\zeta^n)\right)_n$ is bounded in $\Theta_{\Gamma_1}\times\Theta_{\Gamma_1}$. We can  then extract a subsequence denoted again $\left((\varphi^n,\zeta^n)\right)_n$ which converges  weakly in $\Theta_{\Gamma_1}\times\Theta_{\Gamma_1}$ to an element $(\tilde{\varphi},\tilde{\zeta})$
\begin{equation}
\label{seq}
(\varphi^n,\zeta^n)\underset{n\rightarrow +\infty}{\longrightarrow}(\tilde{\varphi},\tilde{\zeta})\,\,\mbox{in}\,\,\Theta_{\Gamma_1}\times\Theta_{\Gamma_1}
\end{equation}
According to proposition \ref{pro3.1}, $(u^n,p^n)= \left(u(\varphi^n,\zeta^n),p(\varphi^n,\zeta^n)\right)$ the solution of the weak formulation (\ref{weak10}) associated to
$(\varphi^n,\zeta^n)$ exist and satisfy
\begin{equation}
\label{cont}
|||u^n|||_{1,\Omega}\leq \frac{c}{\alpha}(|||f|||_{0,\Omega}+|||\psi|||_{0,\Gamma_0}+|||\kappa|||_{0,\Gamma_0}+|||\varphi^n|||_{0,\Gamma_1}+|||\zeta^n|||_{0,\Gamma_0})
\end{equation}
and
\begin{equation}
\label{contp}
\|p^n\|_{0,\Omega}\leq\frac{c}{\beta}(|||f|||_{0,\Omega}+|||\psi|||_{0,\Gamma_0}+|||\kappa|||_{0,\Gamma_0}+|||\varphi^n|||_{0,\Gamma_1}+|||\zeta^n|||_{0,\Gamma_0})
\end{equation}
which implies that
the sequence $(u^n)_n$ (respectively $(p^n)_n$  is bounded   in $V$ (respectively in $\Theta$). We can then extract a subsequence denoted again respectively $(u^n)_n$ and $(p^n)_n$ which converges weakly respectively in $V$ and $\Theta$.
So, we have
\begin{equation}
\label{uweak}
u^n\underset{n\rightarrow +\infty}{\longrightarrow}\tilde{u}\,\,\mbox{in}\,\,V
\end{equation}
and
\begin{equation}
\label{pweak}
p^n\underset{n\rightarrow +\infty}{\longrightarrow}\tilde{p}\,\,\mbox{in}\,\,\Theta
\end{equation}
It is then easy to verify  that $(\tilde{u},\tilde{p})=\left(u(\tilde{\varphi},\tilde{\zeta}),p(\tilde{\varphi},\tilde{\zeta})\right)$ the solution of (\ref{weak10}) associated to $(\tilde{\varphi},\tilde{\zeta})$. Indeed, since $(u^n,p^n)\in V\times \Theta_0$ is solution of
\begin{equation}
\label{lemm1}
\left\lbrace
\begin{array}{ll}
a(u^n,v)+b(v,p^n)=F((\varphi^n,\zeta^n),v) &\forall v\in V\\
b(u^n,q)=0&\forall q\in \Theta_0
\end{array}
\right.
\end{equation}
by passing to the limit in (\ref{lemm1}) as $n \rightarrow\infty$  and using the convergence (\ref{uweak}) ,(\ref{pweak}) and (\ref{seq}), we get
\begin{equation}
\label{lemm2}
\left\lbrace
\begin{array}{ll}
a(\tilde{u},v)+b(v,p^n)=F((\tilde{\varphi},\tilde{\zeta}),v) &\forall v\in V\\
b(\tilde{u},q)=0&\forall q\in \Theta_0
\end{array}
\right.
\end{equation}
i.e.  $(\tilde{u},\tilde{p})$ is the unique solution of (\ref{weak10}) associated $(\tilde{\varphi},\tilde{\zeta})$.  Thus we have the following weak convergence
\begin{equation}
u_2^n=u_2(\varphi^n,\zeta^n)\underset{n\rightarrow +\infty}{\longrightarrow} \tilde{u}_2(\varphi,\zeta)\,\,\mbox{in}\,\,\Theta
\label{u2weak}
\end{equation}
\begin{equation}
p_2^n=p_2(\varphi^n,\zeta^n)\underset{n\rightarrow +\infty}{\longrightarrow} \tilde{p}_2(\varphi,\zeta)\,\,\mbox{in}\,\,\Theta
\label{p2weak}
\end{equation}
Then according to the lower semi-continuity of the norms in $L^2$, we obtain
\begin{equation}
\inf_{(\eta,s)\in\Theta_{\Gamma_1}\times\Theta_{\Gamma_1}}J^\delta_\varepsilon(\eta,s)=\displaystyle \liminf\limits_{n\rightarrow +\infty}J^\delta_\varepsilon(\varphi^n,\zeta^n)\geq J^\delta_\varepsilon(\tilde{\varphi},\tilde{\zeta})
\end{equation}
%and
%\begin{equation}
%\displaystyle\lim\limits_{n\rightarrow +\infty}J(\varphi^n,\zeta^n)=\liminf\limits_{n\rightarrow +\infty}J(\varphi^n,\zeta^n)\geq J(\varphi,\zeta)=\underset{(\eta,s)\in L^2(\Gamma_1)^d\times L^2(\Gamma_1)^d}{\inf}J(\eta,s)
%\end{equation}
%Thus
%\begin{equation}
%J(\varphi,\zeta)=\underset{(\eta,s)\in L^2(\Gamma_1)^d\times L^2(\Gamma_1)^d}{\inf}J(\eta,s)
%\end{equation}
Therefore, the problem (\ref{pb3.2}) admits at least one solution solution. Then it is easy to see that $J^\delta_\varepsilon$ is strictly convex, which permits to conclude that the solution of the problem (\ref{pb3.2}) is unique.
\end{proof}
The gradient formula of $J^\delta_\varepsilon$ is given in the following result.
\begin{lem}
Let $(\varphi,\zeta)\in \Theta_{\Gamma_1}\times \Theta_{\Gamma_1}$, then for all $(\eta,s)\in \Theta_{\Gamma_1}\times \Theta_{\Gamma_1}$ the gradient of $ J_\varepsilon^\delta$ is given by
$$\nabla J_\varepsilon^\delta(\varphi,\zeta)(\eta,s)=(u_2^\delta(\varphi, \zeta),u_2^\delta(\eta,s)-u^\delta_2(0,0))_{0,\Omega}+(p_2^\delta(\varphi, \zeta),p_2^\delta(\eta,s)-p^\delta_2(0,0))_{0,\Omega}+\varepsilon(\varphi,\eta)_{0,\Gamma_1}+\varepsilon(\zeta,s)_{0,\Gamma_1}$$
\end{lem}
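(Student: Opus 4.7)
The plan is to exploit the affine dependence of the state $(u^\delta(\varphi,\zeta),p^\delta(\varphi,\zeta))$ on the control $(\varphi,\zeta)$ that is built into the weak problem (\ref{weak15}). Since the bilinear forms $a$ and $b$ do not depend on $(\varphi,\zeta)$, and since $F^\delta(\varphi,\zeta,v)$ is affine in $(\varphi,\zeta)$ with linear part $\int_{\Gamma_1}(\varphi+i\zeta)\bar v\,ds$, the solution map decomposes as
\begin{equation*}
u^\delta(\varphi,\zeta)=u^\delta(0,0)+\widetilde u^\delta(\varphi,\zeta),\qquad p^\delta(\varphi,\zeta)=p^\delta(0,0)+\widetilde p^\delta(\varphi,\zeta),
\end{equation*}
where $(\widetilde u^\delta,\widetilde p^\delta)$ solves the weak problem (\ref{weak15}) with $f=0$, $\psi^\delta=0$, $\kappa^\delta=0$, and is therefore linear in $(\varphi,\zeta)$. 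In particular $\widetilde u^\delta(\eta,s)=u^\delta(\eta,s)-u^\delta(0,0)$ and $\widetilde p^\delta(\eta,s)=p^\delta(\eta,s)-p^\delta(0,0)$, and taking imaginary parts gives the quantities that appear in the formula.

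Second, I would compute the directional (Gâteaux) derivative of $J_\varepsilon^\delta$ directly from its definition. For $t\in\R$ and $(\eta,s)\in\Theta_{\Gamma_1}\times\Theta_{\Gamma_1}$, the linearity above yields
\begin{equation*}
u_2^\delta(\varphi+t\eta,\zeta+ts)=u_2^\delta(\varphi,\zeta)+t\bigl(u_2^\delta(\eta,s)-u_2^\delta(0,0)\bigr),
\end{equation*}
and likewise for $p_2^\delta$. Substituting into (\ref{20}), expanding each squared $L^2$-norm, and reading off the coefficient of $t$ as $t\to 0$ yields exactly the claimed expression; the two $L^2(\Gamma_1)$ inner products $\varepsilon(\varphi,\eta)$ and $\varepsilon(\zeta,s)$ come from differentiating the Tikhonov penalty.

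Finally, I would remark that $J_\varepsilon^\delta$ is Fréchet differentiable on $\Theta_{\Gamma_1}\times\Theta_{\Gamma_1}$ (the remainder after the linear part is $\tfrac{t^2}{2}$ times a finite quantity, hence $o(t)$), so the directional derivative computed above coincides with the gradient in the Riesz sense on $\Theta_{\Gamma_1}\times\Theta_{\Gamma_1}$. No serious obstacle is expected: the whole argument rests on the linearity of the Stokes operator together with the continuous dependence on data provided by Proposition \ref{pro3.1}, which ensures that the decomposition above lives in $V\times\Theta_0$ and that all $L^2$ inner products are well defined. The only small care needed is to keep track of the fact that subtracting $u^\delta(0,0)$ and $p^\delta(0,0)$ is what removes the inhomogeneous data $f,\psi^\delta,\kappa^\delta$ from the derivative, yielding the formula as stated.
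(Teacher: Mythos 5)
Your proposal is correct and follows essentially the same route as the paper: both exploit the affine dependence of the state on $(\varphi,\zeta)$ to obtain $u_2^\delta(\varphi+t\eta,\zeta+ts)=u_2^\delta(\varphi,\zeta)+t\bigl(u_2^\delta(\eta,s)-u_2^\delta(0,0)\bigr)$ (and likewise for $p_2^\delta$), then expand the quadratic cost and read off the first-order term. Your write-up is in fact slightly cleaner, since you justify the linearity via the structure of $F^\delta$ and keep track of the $O(t^2)$ remainder, which the paper's intermediate identities gloss over.
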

\begin{proof}
Let $(\varphi,\zeta)$ and $(\eta,s)$ in $\Theta_{\Gamma_1}\times \Theta_{\Gamma_1}$, and let $\tau $ be a real which it will tend to $0$. We consider
$\left(u_2^\delta(\varphi+\tau\eta,\zeta+\tau s),p_2^\delta(\varphi+\tau\eta,\zeta+\tau s)\right)$ the solution of
\begin{equation}\left\lbrace
\begin{array}{llll}
-2\mu\,div(D(u_2^\delta))+\nabla p_2^\delta=0&in  &\Omega\\
div(u_2^\delta)=0 &on &\Omega\\
\sigma(u_2^\delta)n+u_1^\delta=\kappa^\delta & on & \Gamma_0\\
 \sigma(u_2^\delta)n+u_1^\delta=\zeta+\tau s & on &\Gamma_1\\
\end{array}\right.
%\label{pb7}
\end{equation}
where $\left(u_1^\delta(\varphi+\tau\eta,\zeta+\tau s), p_1^\delta(\varphi+\tau\eta,\zeta+\tau s)\right)$ is the solution of
\begin{equation}\left\lbrace
\begin{array}{llll}
-2\mu\,div(D(u_1^\delta))+\nabla p_1^\delta=f&in  &\Omega\\
div(u_1^\delta)=0 &on &\Omega\\
\sigma(u_1^\delta)n-u_2^\delta=\psi^\delta & on & \Gamma_0\\
 \sigma(u_1^\delta)n-u_2^\delta=\varphi+\tau\eta & on &\Gamma_1\\
\end{array}\right.
\label{pb6}
\end{equation}
It is then easy to see that
\begin{equation}
u_2^\delta(\varphi+\tau\eta,\zeta+\tau s)=u_2^\delta(\varphi,\zeta)+\tau\left(u_2^\delta(\eta,s)-u_2^\delta(0,0)\right)
\label{u2lin}
\end{equation}
and
\begin{equation}
p_2^\delta(\varphi+\tau\eta,\zeta+\tau s)=p_2^\delta(\varphi,\zeta)+\tau\left(p_2^\delta(\eta,s)-p_2^\delta(0,0)\right)
\label{p2lin}
\end{equation}
By using the equation (\ref{u2lin}) and (\ref{p2lin}), we get
\begin{equation}
\left( u_2^\delta(\varphi+\tau\eta,\zeta+\tau s),u_2^\delta(\varphi+\tau\eta,\zeta+\tau s)\right)_{0,\Omega}-\left( u_2^\delta(\varphi,\zeta),u_2^\delta(\varphi,\zeta)\right)_{0,\Omega}=(u_2^\delta(\varphi, \zeta),u_2^\delta(\eta,s)-u^\delta_2(0,0))_{0,\Omega}
\label{q1}
\end{equation}
and
\begin{equation}
\left( p_2^\delta(\varphi+\tau\eta,\zeta+\tau s),p_2^\delta(\varphi+\tau\eta,\zeta+\tau s)\right)_{0,\Omega}-\left(p_2^\delta(\varphi,\zeta),p_2^\delta(\varphi,\zeta)\right)_{0,\Omega}=(p_2^\delta(\varphi, \zeta),p_2^\delta(\eta,s)-p^\delta_2(0,0))_{0,\Omega}
\label{q2}
\end{equation}
Thus the formula of the gradient of $ J_\varepsilon^\delta$ follows from the equations $(\ref{q1})$ and $(\ref{q2})$.
\end{proof}
%We can verify that for any $(\varphi,\zeta),(\eta,s)\in \Theta_{\Gamma_1}\times \Theta_{\Gamma_1}$
%$$(J_\varepsilon^\delta)'(\varphi,\zeta),(\eta,s)=(u_2^\delta(\varphi, \zeta),u_2^\delta(\eta,s)-u^\delta_2(0,0))_{0,\Omega}+(p_2^\delta(\varphi, \zeta),p_2^\delta(\eta,s)-p^\delta_2(0,0))_{0,\Omega}+\varepsilon(\varphi,\eta)_{0,\Gamma_1}+\varepsilon(\zeta,s)_{0,\Gamma_1}$$
%for $\varepsilon>0$, $J_\varepsilon^\delta(\cdot)$ is  strictly convex.
We can then state the following existence result.
%\ref{pb3.2}, we give the follwing well-posedness result.
\begin{prop}
For any $\varepsilon>0$, the unique  solution $(\varphi_\varepsilon^\delta,\zeta_\varepsilon^\delta) \in \Theta_{\Gamma_1}\times \Theta_{\Gamma_1}$ of the problem (\ref{pb3.2})  is given  by
%depends continuously on data. Moreover $(\varphi_\varepsilon^\delta,\zeta_\varepsilon^\delta)$
\begin{equation}
\label{phi_t}
\varphi_\varepsilon^{\delta}=-\frac{1}{\varepsilon}w{^\delta}_{2|\Gamma_1}\qquad
\zeta_\varepsilon^{\delta}=-\frac{1}{\varepsilon}w^{\delta}_{1|\Gamma_1},
\end{equation}
where $w_1^\delta=w_1^\delta(\varphi_\varepsilon^\delta,\zeta_\varepsilon^\delta)$ and $w_2^\delta=w_2^\delta(\varphi_\varepsilon^\delta,\zeta_\varepsilon^\delta)$ are the real and imaginary parts of the  solution $w^\delta$ of the adjoint state boundary value problem:
\begin{equation}
\label{15}\left\lbrace
\begin{array}{llll}
-2\mu\,div(D(w^\delta))+\nabla \tilde{p}^\delta=u_2^\delta & in  &\Omega\\
div(w^\delta)=-p_2^\delta &on &\Omega\\
\sigma(w^\delta)n+iw^\delta=0 & on & \Gamma\\
\end{array}\right.
\end{equation}
and $u_2^\delta=u_2^\delta(\varphi_\varepsilon^{\delta},\zeta_\varepsilon^\delta),\quad p_2^\delta=p_2^\delta(\varphi_\varepsilon^{\delta},\zeta_\varepsilon^\delta)$ are the imaginary parts of the solution $(u^\delta,p^\delta)$ of the problem \ref{weak15}, with $(\varphi,\zeta)$ being replaced by $(\varphi_\varepsilon^\delta,\zeta_\varepsilon^\delta).$
\label{pp3.3}
\end{prop}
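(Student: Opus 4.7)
The plan is to use the first order optimality condition for the strictly convex functional $J_\varepsilon^\delta$: the unique minimizer $(\varphi_\varepsilon^\delta,\zeta_\varepsilon^\delta)$ supplied by Lemma~\ref{lemexist} is characterised by $\nabla J_\varepsilon^\delta(\varphi_\varepsilon^\delta,\zeta_\varepsilon^\delta)(\eta,s)=0$ for every $(\eta,s)\in \Theta_{\Gamma_1}\times\Theta_{\Gamma_1}$. Inserting the gradient formula from the preceding lemma, this condition reads
\begin{equation*}
(u_2^\delta,U_2)_{0,\Omega}+(p_2^\delta,P_2)_{0,\Omega}+\varepsilon(\varphi_\varepsilon^\delta,\eta)_{0,\Gamma_1}+\varepsilon(\zeta_\varepsilon^\delta,s)_{0,\Gamma_1}=0,
\end{equation*}
where $u_2^\delta=u_2^\delta(\varphi_\varepsilon^\delta,\zeta_\varepsilon^\delta)$, $p_2^\delta=p_2^\delta(\varphi_\varepsilon^\delta,\zeta_\varepsilon^\delta)$, and by linearity of (\ref{weak15}) in the Robin data on $\Gamma_1$, $(U,P):=\bigl(u^\delta(\eta,s)-u^\delta(0,0),\,p^\delta(\eta,s)-p^\delta(0,0)\bigr)$ solves the homogeneous complex Stokes problem with vanishing source and Cauchy data and Robin datum $\eta+is$ on $\Gamma_1$.

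The heart of the argument is to rewrite $(u_2^\delta,U_2)_{0,\Omega}+(p_2^\delta,P_2)_{0,\Omega}$ as an integral on $\Gamma_1$ involving $w^\delta=w_1^\delta+iw_2^\delta$. I would decompose $U$, $P$ and $w^\delta$ into real and imaginary parts, converting the complex equations into two coupled real subsystems analogous to (\ref{pb_6})--(\ref{pb7}); in particular the complex Robin identity $\sigma(w^\delta)n+iw^\delta=0$ on $\Gamma$ becomes $\sigma(w_1^\delta)n=w_2^\delta$ and $\sigma(w_2^\delta)n=-w_1^\delta$ on $\Gamma$. Substituting $u_2^\delta=-2\mu\,\mathrm{div}(D(w_1^\delta))+\nabla\tilde p_1^\delta$ and $p_2^\delta=-\mathrm{div}(w_1^\delta)$ from the real part of the adjoint system and applying Green's formula twice (using $\mathrm{div}(U_2)=0$) yields
\begin{equation*}
(u_2^\delta,U_2)_{0,\Omega}+(p_2^\delta,P_2)_{0,\Omega}=-\int_\Gamma w_2^\delta\cdot U_2\,ds+\int_\Gamma \sigma(U_2)n\cdot w_1^\delta\,ds.
\end{equation*}
To eliminate the still unknown traces of $U_1,U_2$, I would pair the imaginary part of the adjoint momentum equation against $U_1$ and, symmetrically, the homogeneous equation for $U_1$ against $w_2^\delta$; using $\mathrm{div}(w_2^\delta)=0$, $\mathrm{div}(U_1)=0$ and $\sigma(w_2^\delta)n=-w_1^\delta$, this produces the complementary identity $\int_\Gamma w_1^\delta\cdot U_1\,ds+\int_\Gamma w_2^\delta\cdot U_2\,ds=-\int_{\Gamma_1}\eta\cdot w_2^\delta\,ds$. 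Combining both identities with the Cauchy boundary relations $\sigma(U_1)n=U_2$, $\sigma(U_2)n=-U_1$ on $\Gamma_0$ and $\sigma(U_1)n=\eta+U_2$, $\sigma(U_2)n=s-U_1$ on $\Gamma_1$ makes all $\Gamma_0$ contributions cancel, leaving
\begin{equation*}
(u_2^\delta,U_2)_{0,\Omega}+(p_2^\delta,P_2)_{0,\Omega}=\int_{\Gamma_1}\eta\cdot w_2^\delta\,ds+\int_{\Gamma_1}s\cdot w_1^\delta\,ds.
\end{equation*}

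Plugging this expression back into the Euler equation and testing successively with $s=0$ and $\eta=0$ yields the claimed formulas $\varphi_\varepsilon^\delta=-\tfrac{1}{\varepsilon}w_2^\delta|_{\Gamma_1}$ and $\zeta_\varepsilon^\delta=-\tfrac{1}{\varepsilon}w_1^\delta|_{\Gamma_1}$, since $\eta$ and $s$ range over all of $\Theta_{\Gamma_1}$. The main obstacle I anticipate is the bookkeeping in the Green identity step: the complex Robin coupling genuinely mixes real and imaginary parts on $\Gamma$, so a single integration by parts is not enough, and the $\Gamma_0$ traces of $U_1,U_2$ disappear only after combining the identities obtained from both the real and imaginary adjoint equations.
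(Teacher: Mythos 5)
Your proposal is correct and follows essentially the same route as the paper's proof: the Euler--Lagrange condition $\nabla J_\varepsilon^\delta(\varphi_\varepsilon^\delta,\zeta_\varepsilon^\delta)(\eta,s)=0$, the gradient formula from the preceding lemma, and Green's identities pairing the adjoint state $w^\delta$ with the increment $(u^\delta(\eta,s)-u^\delta(0,0),\,p^\delta(\eta,s)-p^\delta(0,0))$ so as to convert the domain integrals into boundary integrals over $\Gamma_1$. The only difference is cosmetic: you perform the integration by parts separately on the real and imaginary subsystems, whereas the paper manipulates the complex fields directly; your componentwise bookkeeping of the Robin coupling $\sigma(w_1^\delta)n=w_2^\delta$, $\sigma(w_2^\delta)n=-w_1^\delta$ is, if anything, the more careful rendering of the same computation.
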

\begin{proof}
For any $\varepsilon>0,$ %the well-posedness of problem (\ref{pb3.2}) follows from a standard result on convex minimization problems \cite{mini}.\\
%Moreover,  
the solution $(\varphi_{\varepsilon}^{\delta},\zeta_{\varepsilon}^{\delta})$ is characterized by
\begin{equation}
\label{23}
\nabla J_\varepsilon^\delta(\varphi_{\varepsilon}^{\delta},\zeta_{\varepsilon}^{\delta})(\eta,s)=0
\end{equation}
In other side, let $\tilde{u}=u^\delta(\eta,s)-u^\delta(0,0)$ and $q=p^\delta(\eta,s)-p^\delta(0,0),$
so that  $(\tilde{u},q)$ is the solution of
\begin{equation}\left\lbrace
\begin{array}{llll}
-2\mu\,div(D(\tilde{u}))+\nabla q=0&on  &\Omega\\
div(\tilde{u})=0 &on &\Omega\\
\sigma(\tilde{u})n=0,\qquad \tilde{u}=0& on & \Gamma_0\\
 \sigma(\tilde{u})n=\eta,\qquad \tilde{u}=s & on &\Gamma_1\\

\end{array}\right.
\end{equation}
which can be written with Robin boundary conditions as
\begin{equation}
\label{15}\left\lbrace
\begin{array}{llll}
-2\mu\,div(D(\tilde{u}))+\nabla q=0& in  &\Omega\\
div(\tilde{u})=0 &on &\Omega\\
\sigma(\tilde{u})n+i\tilde{u}=0 & on & \Gamma_0\\
 \sigma(\tilde{u})n+i\tilde{u}=\eta+is & on &\Gamma_1\\

\end{array}\right.
\end{equation}
By multiplying the adjoint equation by $(\tilde{u},q)$ and integrating  on $ \Omega$, we have
\begin{align*}
-2\mu\int_{\Omega}div(D(w^\delta))\tilde{u}dx+\int_{\Omega}\nabla \tilde{p}^\delta\tilde{u}dx&=\int_{\Omega}u_2^\delta \tilde{u}dx\\
\int_{\Omega} div w^\delta qdx=-\int_{\Omega} p_2^\delta q dx.
\end{align*}
Using then the Green formulas, we obtain
$$
2\mu\int_{\Omega}D(w^\delta):D(\tilde{u})dx-2\mu\int_{\Gamma}D(w^\delta)n\tilde{u} ds+\int_{\Gamma}\tilde{p}n\tilde{u} ds-\int_{\Omega}\tilde{p}^\delta div(\tilde{u})dx=\int_{\Omega}u_2^\delta\tilde{u} dx
$$
$$-\int_{\Omega}\nabla q w^\delta\,dx+\int_{\Gamma}w^\delta\,qn\,ds=-\int_{\Omega}p_2^\delta qdx$$
Then

\begin{equation}
-2\mu\int_{\Omega}div(D(\tilde{u}))w^\delta dx+2\mu\int_{\Omega} D(\tilde{u}) n w^\delta dx-2\mu\int_{\Gamma}D(w^\delta)n\tilde{u} dx+\int_{\Gamma}\tilde{p}^\delta n\tilde{u}ds-\int_{\Omega}\tilde{p}^\delta div(\tilde{u})dx=\int_{\Omega}u_2^\delta\tilde{u} dx
\label{eq*}
\end{equation}
and
\begin{equation}
-\int_{\Omega}\nabla q w^\delta\,dx+\int_{\Gamma}w^\delta\,q\,n\,ds=-\int_{\Omega}p_2^\delta q dx
\label{eq**}
\end{equation}
by subtracting  $(\ref{eq*})$ and $(\ref{eq**}$), we obtain
\begin{align*}
-2\mu\int_{\Omega}div(D(\tilde{u}))w^\delta dx+\int_{\Omega}\nabla q w^\delta dx+\int_{\Gamma} \sigma(\tilde{u})n w^\delta ds-\int_{\Gamma}\sigma(w^\delta)n\tilde{u}ds-\int_{\Omega}\tilde{p}^\delta div(\tilde{u})dx =\int_{\Omega}u_2^\delta \tilde{u}dx+\int_{\Omega}p_2^\delta q\,dx
\end{align*}
%$$-2\mu\int_{\Omega}div(D(\tilde{u}))w^\delta dx+\int_{\Omega}\nabla q wdx+2\mu\int_{\Omega} D(\tilde{u})nw^\delta dx-2\mu\int_{\partial \Omega}D(w^\delta)n\tilde{u}dx+\int_{\Gamma}\tilde{p}n\tilde{u}ds-\int_{\partial\Omega}wqn ds-\int_{\Omega}\tilde{p}div(\tilde{u})dx=\int_{\Omega}u_2\tilde{u}dx+\int_{\Omega}p_2q\,dx$$
Thus
$$\int_{\Gamma}\sigma(\tilde{u})nw^\delta ds-\int_{\Gamma}\sigma(w^\delta)n\tilde{u}ds=\int_{\Omega}u_2^\delta\tilde{u}dx+\int_{\Omega}p_2^\delta q\,dx$$
therefore
$$\int_{\Gamma_1}(\eta+is)w^\delta\,ds=\int_{\Omega}u_2^\delta\tilde{u}dx +\int_{\Omega}p_2^\delta q\,dx$$
By replacing the last expression in the gradient formula, we obtain
$$(\eta+is,w^\delta)_{0,\Gamma_1}+\varepsilon(\varphi,\eta)_{0,\Gamma_1}+\varepsilon(\zeta,s)_{0,\Gamma_1}=0$$
Then,
$$(\eta,w_2^\delta+\varepsilon\varphi)_{0,\Gamma_1}+(s,w_1^\delta+\varepsilon \zeta)_{0,\Gamma_1}=0.$$
By taking $\eta=w_2^\delta+\varepsilon\varphi$ and $s=w_1^\delta+\varepsilon \zeta$,
we get  the formula (\ref{phi_t}).
\end{proof}

Now, let us study the behavior of $(\varphi_\varepsilon^\delta,\zeta_\varepsilon^\delta)$ as $\delta\rightarrow 0$ and $\varepsilon\rightarrow 0.$ To do that, we assume that the Cauchy data $(\psi, \kappa)$ are compatible. Then according to \cite{izakov, lions}, the problem $(\ref{eq1})$ admits a unique solution $(\varphi^*, \zeta^*)\in  H^{-\frac{1}{2}}(\Gamma_1)\times H^\frac{1}{2}(\Gamma_1)$.

For a sequence of noise levels $\{\delta_n\}_{n\geq 1}$ which converges to $0$ in $\mathbb{R}$ as $n\rightarrow\infty$, let $\varepsilon_n = \varepsilon(\delta_n)$ be chosen satisfying $\varepsilon_n \rightarrow 0$ and $\frac{\delta^2_n}{\varepsilon_n}\rightarrow 0$, as $n\rightarrow\infty$. Denote by $(\varphi^{\delta_n}_{\varepsilon_n}, \zeta^{\delta_n}_{\varepsilon_n}) \in \Theta_{\Gamma_1}\times \Theta_{\Gamma_1}$
the solution of $(\ref{pb3.2})$ associated to $(\psi^\delta, \kappa^\delta)$
 and $\varepsilon$ replaced by $(\psi^{\delta_n}, \kappa^{\delta_n})$ and $\varepsilon_n$ respectively, and moreover assume  that $\varphi^*$ belongs to $\Theta_{\Gamma_1}$. We have then the following result:
\begin{prop}
There exists a subsequence of solution of $(\ref{pb3.2})$ denoted again $\{(\varphi^{\delta_n}_{\varepsilon_n}, \zeta^{\delta_n}_{\varepsilon_n})\}_{n}$   which converges to $(\varphi^*,\zeta^*)$ in $\Theta_{\Gamma_1} \times \Theta_{\Gamma_1}$ as $n\rightarrow\infty$.
\end{prop}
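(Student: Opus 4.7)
My plan is to exploit the minimizing property of $(\varphi^{\delta_n}_{\varepsilon_n},\zeta^{\delta_n}_{\varepsilon_n})$ by testing against the true Cauchy solution $(\varphi^*,\zeta^*)$ as an admissible competitor. Since $(\varphi^*,\zeta^*)$ drives the imaginary part of the exact-data complex problem $(\ref{pb5})$ to zero (by the equivalence established in $(\ref{pb7-1})$), the error estimates $(\ref{19})$ and $(\ref{press})$ applied to the difference $u^{\delta_n}(\varphi^*,\zeta^*)-u^*$ show that
\[
|||u_2^{\delta_n}(\varphi^*,\zeta^*)|||_{0,\Omega}+\|p_2^{\delta_n}(\varphi^*,\zeta^*)\|_{0,\Omega}\le c\delta_n.
\]
Therefore
\[
J^{\delta_n}_{\varepsilon_n}(\varphi^{\delta_n}_{\varepsilon_n},\zeta^{\delta_n}_{\varepsilon_n})\le J^{\delta_n}_{\varepsilon_n}(\varphi^*,\zeta^*)\le c\delta_n^2+\tfrac{\varepsilon_n}{2}\bigl(|||\varphi^*|||^2_{0,\Gamma_1}+|||\zeta^*|||^2_{0,\Gamma_1}\bigr).
\]

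Dividing the inequality by $\varepsilon_n/2$ and using $\delta_n^2/\varepsilon_n\to 0$ gives a uniform bound
\[
|||\varphi^{\delta_n}_{\varepsilon_n}|||^2_{0,\Gamma_1}+|||\zeta^{\delta_n}_{\varepsilon_n}|||^2_{0,\Gamma_1}\le \tfrac{2c\delta_n^2}{\varepsilon_n}+|||\varphi^*|||^2_{0,\Gamma_1}+|||\zeta^*|||^2_{0,\Gamma_1},
\]
and, reading the original inequality directly, the strong convergence
\[
u_2^{\delta_n}(\varphi^{\delta_n}_{\varepsilon_n},\zeta^{\delta_n}_{\varepsilon_n})\to 0 \text{ in } \Theta,\qquad p_2^{\delta_n}(\varphi^{\delta_n}_{\varepsilon_n},\zeta^{\delta_n}_{\varepsilon_n})\to 0 \text{ in } \Theta_0.
\]
From the boundedness I extract a subsequence with $(\varphi^{\delta_n}_{\varepsilon_n},\zeta^{\delta_n}_{\varepsilon_n})\rightharpoonup(\hat\varphi,\hat\zeta)$ weakly in $\Theta_{\Gamma_1}\times\Theta_{\Gamma_1}$. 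By Proposition~\ref{pro3.1} the associated states $(u^{\delta_n}_{\varepsilon_n},p^{\delta_n}_{\varepsilon_n})$ are bounded in $V\times\Theta_0$, so (up to a further subsequence) they converge weakly to some $(\hat u,\hat p)$.

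Next I pass to the limit in the weak formulation $(\ref{weak15})$: the bilinear forms $a$ and $b$ pass to their weak limits, the volume term involving $f$ is unchanged, the boundary terms over $\Gamma_0$ converge because $(\psi^{\delta_n},\kappa^{\delta_n})\to(\psi,\kappa)$ strongly in $\Theta_{\Gamma_0}$, and the boundary terms over $\Gamma_1$ converge thanks to the weak convergence of $(\varphi^{\delta_n}_{\varepsilon_n},\zeta^{\delta_n}_{\varepsilon_n})$ paired against a fixed test function $v\in V$. The limit $(\hat u,\hat p)$ is therefore the unique solution of $(\ref{weak10})$ with boundary data $(\hat\varphi,\hat\zeta)$. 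Using the weak lower semi-continuity of the $L^2$ norm together with the strong convergence $u_2^{\delta_n}_{\varepsilon_n}\to 0$ and $p_2^{\delta_n}_{\varepsilon_n}\to 0$, the imaginary parts satisfy $\hat u_2=0$ and $\hat p_2=0$, so $(\hat\varphi,\hat\zeta)$ solves the reformulation $(\ref{pb7-1})$, i.e.\ the Cauchy problem $(\ref{eq1})$. By the uniqueness guaranteed by the compatibility assumption (\cite{izakov,lions}), $(\hat\varphi,\hat\zeta)=(\varphi^*,\zeta^*)$.

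Finally I upgrade the weak convergence to strong convergence in $\Theta_{\Gamma_1}\times\Theta_{\Gamma_1}$: the bound above gives
\[
\limsup_{n\to\infty}\bigl(|||\varphi^{\delta_n}_{\varepsilon_n}|||^2_{0,\Gamma_1}+|||\zeta^{\delta_n}_{\varepsilon_n}|||^2_{0,\Gamma_1}\bigr)\le |||\varphi^*|||^2_{0,\Gamma_1}+|||\zeta^*|||^2_{0,\Gamma_1},
\]
while weak convergence to $(\varphi^*,\zeta^*)$ provides the matching lower bound via lower semi-continuity. Convergence of the norms together with weak convergence in a Hilbert space yields strong convergence, completing the proof. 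The main obstacle I anticipate is the weak-to-strong passage in the PDE, specifically justifying that $\int_{\Gamma_1}(\varphi^{\delta_n}_{\varepsilon_n}+i\zeta^{\delta_n}_{\varepsilon_n})\bar v\,ds\to\int_{\Gamma_1}(\hat\varphi+i\hat\zeta)\bar v\,ds$ uniformly enough to identify the limit state, and invoking the compatibility-based uniqueness of the Cauchy solution to pin the weak limit down to $(\varphi^*,\zeta^*)$; everything else is bookkeeping with the a priori estimates.
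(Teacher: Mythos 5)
Your proposal is correct and follows essentially the same route as the paper: testing the minimizer against $(\varphi^*,\zeta^*)$, using the error estimates $(\ref{19})$ and $(\ref{press})$ to bound $J^{\delta_n}_{\varepsilon_n}(\varphi^*,\zeta^*)$, extracting weakly convergent subsequences, identifying the limit state via the weak formulation, invoking uniqueness of the Cauchy solution, and upgrading to strong convergence from the norm bound plus weak convergence. The only (harmless) cosmetic differences are that you read off the strong convergence $u_2^{\delta_n}\to 0$, $p_2^{\delta_n}\to 0$ directly from $J^{\delta_n}_{\varepsilon_n}\to 0$ rather than only through lower semicontinuity, and you phrase the final step as the standard Hilbert-space ``weak convergence plus convergence of norms'' fact instead of expanding the square as the paper does.
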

\begin{proof}
%The proof is similar to the proof in \cite{CCBM}
For simplicity, let us denote by $\psi^n =\psi^{\delta_n} ,\kappa^n =\kappa^{\delta_n}, \varphi^n =\varphi^{\delta_n}_{\varepsilon_n}$, $\zeta^n =\zeta^{\delta_n}_{\varepsilon_n}$ and  by $u^n=u_1^n+i\,u^n_2~=~u^\delta_n (\varphi_n, \zeta_n)\in V$  the solution of (\ref{weak15}) associated to $(\varphi_n, \zeta_n)$. It is clear  that $(\varphi^*, \zeta^*)$,  the unique solution of $(\ref{eq1})$,  is also the unique solution of $(\ref{pb5})$, due to the equivalence of the two problems, and thus $$u_2(\varphi^*, \zeta^*) = 0 \,\,\mbox{in} \,\,\Omega,\qquad p_2(\varphi^*, \zeta^*) = 0 \,\,\mbox{in} \,\,\Omega,$$
where $(u_2(\varphi^*,\zeta^*),p_2(\varphi^*,\zeta^*))$   is the imaginary part of the solution of the problem (\ref{weak10}) associated to $(\varphi, \zeta)$ replaced by $(\varphi^*, \zeta^*)$. \\
Therefore, using the fact that  $(\varphi_n, \zeta_n)$ is solution of  $(\ref{pb3.2})$ and the inequalities (\ref{19}) and (\ref{press}), we get
\begin{equation}
\label{25}
\begin{array}{ll}
\displaystyle J^{\delta_n}_{\varepsilon_n}(\varphi^n,\zeta^n)\leq J^{\delta_n}_{\varepsilon_n}(\varphi^*,\zeta^*)& =\displaystyle \frac{1}{2}|||u^{\delta_n}_2(\varphi^*,\zeta^*)|||_{0,\Omega}^2+\frac{1}{2}\|p^{\delta_n}_2(\varphi^*,\zeta^*)\|_{0,\Omega}^2+\frac{\varepsilon_n}{2}|||\varphi^*|||_{0,\Gamma_1}^2+\frac{\varepsilon_n}{2}|||\zeta^*|||_{0,\Gamma_1}^2 \\
& =\displaystyle\frac{1}{2}|||u^{\delta_n}_2(\varphi^*,\zeta^*)-u_2(\varphi^*,\zeta^*)|||_{0,\Omega}^2+\frac{1}{2}\|p^{\delta_n}_2(\varphi^*,\zeta^*)-p_2(\varphi^*,\zeta^*)|||_{0,\Omega}^2\\
&\qquad\displaystyle +\frac{\varepsilon_n}{2}|||\varphi^*|||_{0,\Gamma_1}^2+\frac{\varepsilon_n}{2}|||\zeta^*|||_{0,\Gamma_1}^2 \\
&\leq  \displaystyle c\delta_n^2+\frac{\varepsilon_n}{2}|||\varphi^*|||_{0,\Gamma_1}^2+\frac{\varepsilon_n}{2}|||\zeta^*|||_{0,\Gamma_1}^2.
\end{array}
\end{equation}
Then
\begin{equation}
\label{26}
|||\varphi^n|||^2_{0,\Gamma_1}+|||\zeta^n|||_{0,\Gamma_1}^2\leq c \frac{\delta_n^2}{\varepsilon_n}+|||\varphi^*|||_{0,\Gamma_1}^2+|||\zeta^*|||_{0,\Gamma_1}^2.
\end{equation}
Moreover, from (\ref{18}) and (\ref{18.5}), we get
\begin{equation}
\label{27}
\begin{array}{ll}
\displaystyle|||u^n |||_{1,\Omega}&\leq \frac{c}{\alpha}\left( |||f|||_{0,\Omega}+|||\psi^n|||_{0,\Gamma_0}+|||\kappa^n|||_{0,\Gamma_0}+|||\varphi^n|||_{0,\Gamma_1}+|||\zeta^n|||_{0,\Gamma_1}\right) \\
&\leq \frac{c}{\alpha}\left(|||f|||_{0,\Omega}+2\delta_n+|||\psi|||_{0,\Gamma_0}+|||\kappa|||_{0,\Gamma_0}+|||\varphi^n|||_{0,\Gamma_1}+|||\zeta^n|||_{0,\Gamma_1}\right)
\end{array}
\end{equation}
and
\begin{equation}
\label{27.5}
\|p^n\|_{0,\Omega}\leq \frac{c}{\beta}\left(|||f|||_{0,\Omega}+2\delta_n+|||\psi|||_{0,\Gamma_0}+|||\kappa|||_{0,\Gamma_0}+|||\varphi^n|||_{0,\Gamma_1}+|||\zeta^n|||_{0,\Gamma_1}\right).
\end{equation}
Therefore, by combining (\ref{26}), (\ref{27}) and (\ref{27.5}), for $n$ large enough, the sequence $\{(\varphi^n,\zeta^n,u^n,p^n)\}_n $ is uniformly bounded  in $\Theta_{\Gamma_1}\times \Theta_{\Gamma_1}\times V\times\Theta,$ with respect to $n$. We can then extract a subsequence denoted again $\{(\varphi^n,\zeta^n,u^n,p^n)\}_n $ which converges weakly to  some elements $\left(\tilde{\varphi},\tilde{\zeta}\right)\in \Theta_{\Gamma_1}\times \Theta_{\Gamma_1}, \tilde{u}\in V \mbox{ and } \tilde{p} \in \Theta_0$ such that as $n\rightarrow \infty$
\begin{equation}
\label{conv}
\begin{array}{l}
\{(\varphi^n,\zeta^n)\}_n\rightarrow \left(\tilde{\varphi},\tilde{\zeta}\right) \mbox{ in }\quad \Theta_{\Gamma_1}\times \Theta_{\Gamma_1} \\
u^{n}\rightarrow\tilde{u} \mbox{ in } V,\,\, u^n\rightarrow \tilde{u} \mbox{ in } \Theta,\,\, u^n\rightarrow \tilde{u} \mbox{ in } \Theta_{\Gamma}\\
p^{n}\rightarrow \tilde{p} \mbox{ in } \Theta.
\end{array}
\end{equation}
Using the same technics as in lemma~\ref{lemexist}, it is then easy to verify  that $(\tilde{u},\tilde{p})=\left(u(\tilde{\varphi},\tilde{\zeta}),p(\tilde{\varphi},\tilde{\zeta})\right)$ the solution of (\ref{weak10}) associated $(\tilde{\varphi},\tilde{\zeta})$. %Indeed, since $(u^n,p^n)\in V\times \Theta_0$ is solution of
%\begin{equation}
%\label{29}
%\left\lbrace
%\begin{array}{ll}
%a(u^n,v)+b(v,p^n)=F^{\delta_n}((\varphi^n,\zeta^n),v) &\forall v\in V\\
%b(u^n,q)=0&\forall q\in \Theta_0
%\end{array}
%\right.
%\end{equation}
%by passing to the limit in (\ref{29}) as $n \rightarrow\infty$  and using the convergence (\ref{conv}), we get
%\begin{equation}
%\label{30}
%\left\lbrace
%\begin{array}{ll}
%a(\tilde{u},v)+b(v,p^n)=F(\tilde{\varphi},\tilde{\zeta}),v) &\forall v\in V\\
%b(\tilde{u},q)=0&\forall q\in \Theta_0
%\end{array}
%\right.
%\end{equation}
%i.e.  $(\tilde{u},\tilde{p})$ is the solution of (\ref{weak10}) associated $(\tilde{\varphi},\tilde{\zeta})$.

On the other hand, according the the lower semi-continuity of the norms in $L^2$, and using the fact that $\varepsilon_n\rightarrow 0$ as $n\rightarrow\infty$, we obtain
\begin{equation}
\label{31}
\displaystyle \liminf_{n \rightarrow\infty} J^{\delta_n}_{\varepsilon_n}(\varphi^n,\zeta^n)=\liminf_{n \rightarrow\infty}\frac{1}{2}|||u^n_2|||_{0,\Omega}^2+\frac{1}{2}\|p^n_2\|_{0,\Omega}^2+\frac{\varepsilon_n}{2}|||\varphi^*|||_{0,\Gamma_1}^2+\frac{\varepsilon_n}{2}|||\zeta^*|||_{0,\Gamma_1}^2\geq \frac{1}{2}|||\tilde{u}_2|||_{0,\Omega}^2+\frac{1}{2}\|\tilde{p}_2\|_{0,\Omega}^2
\end{equation}
%where  we use the uniform boundness of $(\varphi^n,\zeta^n)$ and the fact that $\varepsilon_n\rightarrow 0$ as $n'\rightarrow\infty.$\\
From \ref{25}, we have
\begin{equation}
0\leq J^{\delta_n}_{\varepsilon_n}(\varphi^n,\zeta^n)\leq J^{\delta_n}_{\varepsilon_n}(\varphi^*,\zeta^*)\leq c\delta_n^2+\frac{\varepsilon_n}{2}|||\varphi^*|||_{0,\Gamma_1}^2+\frac{\varepsilon_n}{2}|||\zeta^*|||_{0,\Gamma_1}^2
\label{32}
\end{equation}
This implies that
\begin{equation}
\displaystyle \lim_{n\rightarrow\infty}J^{\delta_n}_{\varepsilon_n}(\varphi^n,\zeta^n)=0.
\label{32.5}
\end{equation}
Then from  (\ref{31}) and (\ref{32.5}) we  get
$$\tilde{u}_2=0\qquad \tilde{p}_2=0   \mbox{ in } \Omega,$$
which shows that $(\tilde{\varphi},\tilde{\zeta})\in \Theta_{\Gamma_1}\times \Theta_{\Gamma_1}$ is a solution of the problem (\ref{pb3.2}). Since $(\varphi^*,\zeta^*)$ is the unique solution of the problem  (\ref{pb3.2}), we conclude that $(\tilde{\varphi},\tilde{\zeta})=(\varphi^*,\zeta^*).$
%Thus the limit does not depend on the subsequence selected, and then the entire solution sequence
Moreover the sequence $\{(\varphi^n,\zeta^n)\}_n$ converges to $(\varphi^*,\zeta^*)$ in $\Theta_{\Gamma_1}\times \Theta_{\Gamma_1}$   as $n\rightarrow \infty.$ Indeed, by using (\ref{26}) and the weak convergence (\ref{conv}), we have
\begin{eqnarray*}
\displaystyle
|||\varphi^n-\varphi^*|||^2_{0,\Gamma_1}+|||\zeta^n-\zeta^*|||^2_{0,\Gamma_1}
&=&\displaystyle|||\varphi^n|||^2_{0,\Gamma_1}+|||\zeta^n|||_{0,\Gamma_1}^2+|||\varphi^*|||^2_{0,\Gamma_1}+|||\zeta^*|||^2_{0,\Gamma_1}-2(\varphi^n,\varphi^*)_{0,\Gamma_1}-2(\zeta^n,\zeta^*)_{0,\Gamma_1}\\
&\leq& \displaystyle c\frac{\delta_n^2}{\varepsilon_n}+2|||\varphi^*|||^2_{0,\Gamma_1}+2|||\zeta^*|||^2_{0,\Gamma_1}-2(\varphi^n,\varphi^*)_{0,\Gamma_1}-2(\zeta^n,\zeta^*)_{0,\Gamma_1}
\end{eqnarray*}
Then we get the strong convergence
%as $n\rightarrow\infty$. This shows the strong convergence.
\begin{equation}
\displaystyle \lim_{n\rightarrow}\left(|||\varphi^n-\varphi^*|||^2_{0,\Gamma_1}+|||\zeta^n-\zeta^*|||^2_{0,\Gamma_1}\right)=0.
\label{32.6}
\end{equation}
Which achieves the proof.
\end{proof}

\section{Approximation of the regularized optimal problem}
In this section,  we propose the approximation of the optimal solution of the regularization problem via finite elements method of type  $P_{1Bubble}/P_1$ \cite{finite}. For this,  we try first  to generate a non-iterative algorithm for solving the regularized optimal problem. In fact, the solution of the optimization problem \ref{pb3.2} is obtained by solving some linear systems. Indeed, from the proposition \ref{pp3.3}, we can construct the following algorithm:

 We substitute (\ref{phi_t}) into (\ref{weak15}), to get

\begin{equation}
\label{33}
\left\lbrace
\begin{array}{ll}
a(u^\delta,v)+b(v,p^\delta)+\frac{1}{\varepsilon}(w_2^\delta,\bar{v})_{0,\Gamma_1}+i\frac{1}{\varepsilon}(w_1^\delta,\bar{v})_{0,\Gamma_1}=(f,\bar{ v})+(\psi^\delta+i\kappa^\delta,\bar{v})_{0,\Gamma_0}& \forall v\in V\\
b(u^\delta,q)=0&\forall q\in \Theta_0
\end{array}
\right.
\end{equation}
Also the weak formulation  of (\ref{15}) can be written:
\begin{equation}
\label{34}
\left\lbrace
\begin{array}{ll}
a(w^\delta,v)+b(v,\tilde{p}^\delta)=(u^\delta_2,\bar{ v})_{0,\Omega}& \forall v\in V\\
b(w^\delta,q)=0&\forall q\in \Theta_0
\end{array}
\right.
\end{equation}
Then by combining (\ref{phi_t}), (\ref{33}) and (\ref{34}), we can summarized the following solver of the optimization problem (\ref{pb3.2}):

We solve first the following problem

\begin{equation}
\label{35}
\left\lbrace
\begin{array}{l}

2\mu\displaystyle\int_{\Omega}D(u_1):D(v)dx+\displaystyle\int_{\Omega}p_1div\,v\,dx-(u_2,v)_{0,\Gamma}+\frac{1}{\varepsilon}(w_2,v)_{0,\Gamma_1}=(f,v)_{0,\Omega}+(\psi^\delta,v)_{0,\Gamma_0}\qquad \forall v\in V\\
\displaystyle\int_{\Omega}q\,div u_1\,dx=0\qquad\forall q\in \Theta_0\\

2\mu\displaystyle\int_{\Omega}D(u_2):D(v)dx+\displaystyle\int_{\Omega}p_2div\,v\,dx+(u_1,v)_{0,\Gamma}+\frac{1}{\varepsilon}(w_1,v)_{0,\Gamma_1}=(\kappa^\delta,v)_{0,\Gamma_0}\quad \forall v\in V\\
\displaystyle\int_{\Omega}q\,div u_2\,dx=0\qquad\forall q\in \Theta_0\\

2\mu\displaystyle\int_{\Omega}D(w_1):D(v)dx+\int_{\Omega}\tilde{p_1} div\,v\,dx-(u_2,v)_{0,\Omega}-(w_2,v)_{0,\Gamma}=0\qquad \forall v\in V\\
\displaystyle\int_{\Omega}q\,div w_1\,dx=-p_2\qquad\forall q\in \Theta_0\\

2\mu\displaystyle\int_{\Omega}D(w_2):D(v)dx-\int_{\Omega}\tilde{p_2} div\,v\,dx+(w_1,v)_{0,\Gamma}=0 \qquad\forall v\in V\\
\displaystyle\int_{\Omega}q\,div w_2\,dx=0\qquad\forall q\in \Theta_0\\
\end{array}
\right.
\end{equation}

Then we compute 
\begin{equation}
\label{36}
\varphi^\delta_\varepsilon=-\frac{1}{\varepsilon}w_{2/\Gamma_1},\qquad   \zeta_\varepsilon^\delta=-\frac{1}{\varepsilon}w_{1/\Gamma_1}
\end{equation}

In the sequel, for solving numerically this optimization problem,  the standard conforming linear finite element methods for mixed formulations are applied  to solve \ref{35}, we use  for example $P_{1Bubble}/P_1.$ More precisely, let $\{\mathcal{T}_h\}_h$ be a regular  family of finite element partitions of $\bar{\Omega}.$  We designate by $\lambda_i^{(T)}$ ,$1\leq i\leq 3$ the barycentric coordinates with respect to the vertices of $T$. The  "Bubble" function  $b^{(T)}$ associated to the  triangle T is defined by
$$b^{(T)}=\prod\limits_{i=1}^{3}\lambda_i^{(T)}$$
The function $b$ is in fact a polynomial function of degree 3 which vanishes on the edges of $T.$ We define the space associated with the bubble function by
$$\mathbb{B}_h =\left\{v_h \in C(\bar{\Omega});\, \forall T \in  T_h \,\, v_{h|T} =xb^{(T)}\right\}.$$
We define next the functional spaces
$$V_i^{h} =\left\{v_h \in C(\bar{\Omega});\,\,v_{h|T}\in P^1\,\, \forall T \in  T_h\,\right\} \oplus B_h$$
$$\Theta^h =\left\{q_h \in C(\bar{\Omega});\,\,q_{h|T}\in P^1 \,\,\forall T \in  T_h\right\}$$
we note that
$$V^h=V_1^{h}\times V_2^{h}$$

Then  the finite element discretization of (\ref{35}) reads:

\begin{equation}
\label{37}
\left\lbrace
\begin{array}{l}

2\mu\displaystyle\int_{\Omega}D(u^h_1):D(v^h)dx+\displaystyle\int_{\Omega}p^h_1div\,v^h\,dx-(u^h_2,v^h)_{0,\Gamma}+\frac{1}{\varepsilon}(w^h_2,v^h)_{0,\Gamma_1}=(f,v^h)_{0,\Omega}+(\psi^\delta,v^h)_{0,\Gamma_0}\quad \forall v^h\in V^h\\
\displaystyle\int_{\Omega}q^h\,div u^h_1\,dx=0\qquad\forall q^h\in \Theta_0^h\\

2\mu\displaystyle\int_{\Omega}D(u^h_2):D(v^h)dx+\displaystyle\int_{\Omega}p^h_2div\,v^h+(u^h_1,v^h)_{0,\Gamma}+\frac{1}{\varepsilon}(w^h_1,v^h)_{0,\Gamma_1}=(\kappa^\delta,v^h)_{0,\Gamma_0}\quad \forall v^h\in V^h\\
\displaystyle\int_{\Omega}q^h\,div u^h_2\,dx=0\qquad\forall q^h\in \Theta_0^h\\

2\mu\displaystyle\int_{\Omega}D(w^h_1):D(v^h)dx+\int_{\Omega}\tilde{p_1}^h div\,v^h\,dx-(u^h_2,v^h)_{0,\Omega}-(w^h_2,v^h)_{0,\Gamma}=0\qquad \forall v^h\in V^h\\
\displaystyle\int_{\Omega}q^h\,div w^h_1\,dx=-\int_{\Omega}q^h\,p^h_2\qquad\forall q^h\in \Theta_0^h\\

2\mu\displaystyle\int_{\Omega}D(w^h_2):D(v^h)dx-\int_{\Omega}\tilde{p_2}^h div\,v^h+(w^h_1,v^h)_{0,\Gamma}=0 \qquad\forall v^h\in V^h\\
\displaystyle\int_{\Omega}q\,div w^h_2\,dx=0\qquad\forall q^h\in \Theta_0^h\\
\end{array}
\right.
\end{equation}
and the approximate value of the solution of (\ref{36}) is written 
\begin{equation}
\label{38}
\varphi^h_\varepsilon=-\frac{1}{\varepsilon}w^h_{2/\Gamma_1},\qquad   \zeta^h_\varepsilon=-\frac{1}{\varepsilon}w^h_{1/\Gamma_1}
\end{equation}

 Then we set $V^h=V^h+iV^h$, and define
$$\Theta_{\Gamma_1}^h=\{g^h\in \Theta_{\Gamma_1}\,\,|\,\, \exists v^h\in V^h \mbox{ such that}\,\, g^h=v^h_{|\Gamma_1} \}.$$
Thus it is easy to verify, using the same argument as in the continuous case, that
$$(\varphi_\varepsilon^h,\zeta_\varepsilon^h)=\underset{(\eta^h,s^h)\in \Theta^h_{\Gamma_1}\times \Theta^h_{\Gamma_1}}{argmin} J_{\varepsilon}^h(\eta^h,s^h)$$
with
$$J_\varepsilon^h(\varphi,\zeta)=\frac{1}{2}|||u_2^h(\varphi, \zeta)|||^2_{0,\Omega}+\frac{1}{2}\|p_2^h(\varphi, \zeta)\|^2_{0,\Omega}+ \frac{\varepsilon}{2}|||\varphi|||^2_{0,\Gamma_1}+\frac{\varepsilon}{2}|||\zeta|||^2_{0,\Gamma_1},$$
where $(u_2^h(\varphi,\zeta),p_2^h(\varphi,\zeta))$ is the imaginary part of the solution $(u^h,p^h)\in V^h\times\Theta_0^h$ of
\begin{equation}
\left\lbrace
\begin{array}{l}
a(u^h,v^h)+b(v^h,p^h)=F(\varphi,\zeta;v^h) \qquad\forall v^h\in V^h \\
b(u^h,q^h)=0\qquad\qquad\forall  q^h\in \Theta^h_0(\Omega)
\end{array}
\right.
\end{equation}
%For fixed $\delta,\varepsilon>0,$ we can prove that  $(\varphi_\varepsilon^h,\zeta_\varepsilon^h)\rightarrow(\varphi_\varepsilon^\delta,\zeta_\varepsilon^\delta)
%$  in $\Theta_{\Gamma_1}\times \Theta_{\Gamma_1}$ as $h\rightarrow 0.$ %We omit the detailed argument of this convergence result here.\\
\section{Numerical results}
This section deals with the numerical realization of the proposed approach for solving a Stokes inverse Cauchy problem. % we  present some numerical results showing the effectiveness and the feasibility  of the coupled complex boundary problem based on Tikhononv regularization for solving the Cauchy problem. 
In order to confirm the accurate of the coupled complex boundary problem based on Tikhononv regularization, were going to consider a Stokes inverse Cauchy problem defined by some data which allow us to gets the analytic solution.  For this, we consider the following data: 
\begin{itemize}
\item let $\Omega\subset\mathbb{R}^2$ be a ring defined by 
\[
\{(x,y)\in\R^2, / r^2<x^2+y^2<R^2\}
\]
where $r$ is the inner radius and $R$ is the external radius. 
\item The boundary of $\Omega$ is defined by $\partial\Omega:=\Gamma_0\cup\Gamma_1$. Where $\Gamma_1=\{(x,y)\in\R^2, / x^2+y^2=r^2\}$ is the internal boundary which is inaccessible and $\Gamma_0=\{(x,y)\in\R^2, / x^2+y^2=R^2\}$ is the external boundary. 
\item Dirichlet data $\kappa$ for the  velocity  field over $\Gamma_{0}$:

\begin{equation}
\left\lbrace
\begin{array}{l}
 \kappa_1=ch(x)\,sh(y)\\
 \kappa_2=-ch(y)\,sh(x)\\
 \end{array}
 \right.
\end{equation}
\item The flux $\psi$ over $\Gamma_0$
\begin{equation}
\left\lbrace
\begin{array}{l}
 \psi_1=\mu\,(x\,sh(x)\,sh(y)+y\,ch(x)\,ch(y))-x\,pe\\
 \psi_2=\mu\,(-x\,ch(x)\,ch(y)-y\,sh(y)\,sh(x))-y\,pe\\
 \end{array}
 \right.
\end{equation}
\item The second member
\begin{equation}
\left\lbrace
\begin{array}{l}
  f_1=-2\mu\,ch(x)sh(y)+ysh(x)\\
 f_2=2\mu\,ch(y)sh(x)+ch(x)\\
  \end{array}
 \right.
\end{equation}
\item  We try to reconstruct $(\varphi_\varepsilon^h,\zeta_\varepsilon^h)$ the approximation of velocity $\varphi^*$ and the pressure $\zeta^*$ and the flux  on $\Gamma_1,$ by considering the following exact solution of the problem (\ref{eq1}) associated to the above data
\begin{equation}
\left\lbrace
\begin{array}{l}
 ue1=\mbox{ch(x)sh(y)}\\
 ue2=-\mbox{ch(y)sh(x)}\\
 pe=\mbox{ych(x)-}\frac{\mbox{sh(1)}}{2}
\end{array}
\right.
\end{equation}
%where $\varphi=(\varphi_1,\varphi_2)$, $\zeta=(\zeta_1,\zeta_2)$ and $\psi=(\psi_1,\psi_2).$
\end{itemize}
 First, we study the influence of the parameter of regularisation of Tikhonov $\varepsilon$ on the approximate solution $(\varphi_\varepsilon^h,\zeta_\varepsilon^h)$. 
 In the figures \ref{eps1} and \ref{eps2}, we present the behavior of the solutions $(\varphi_\varepsilon^h,\zeta_\varepsilon^h)$ for some decreasing values of $\varepsilon.$ As we can see for $\varepsilon\in [10^{-4},10^{-6}]$ we obtain a good approximations. 
 \begin{figure}[h!]
\centering
 \includegraphics[scale=0.5]{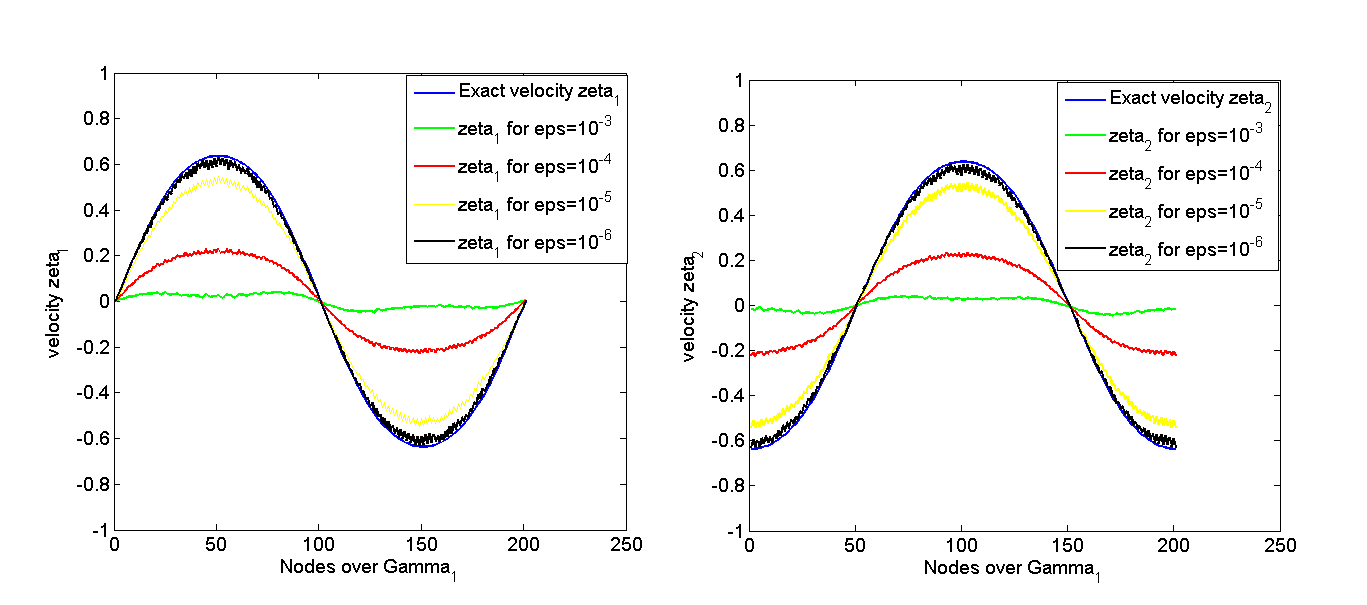}
 \caption{The  solution $\zeta^h_\varepsilon$ for different values of $\varepsilon$}
 \label{eps1}
  \end{figure}
 \begin{figure}[h!]
\centering
 \includegraphics[scale=0.5]{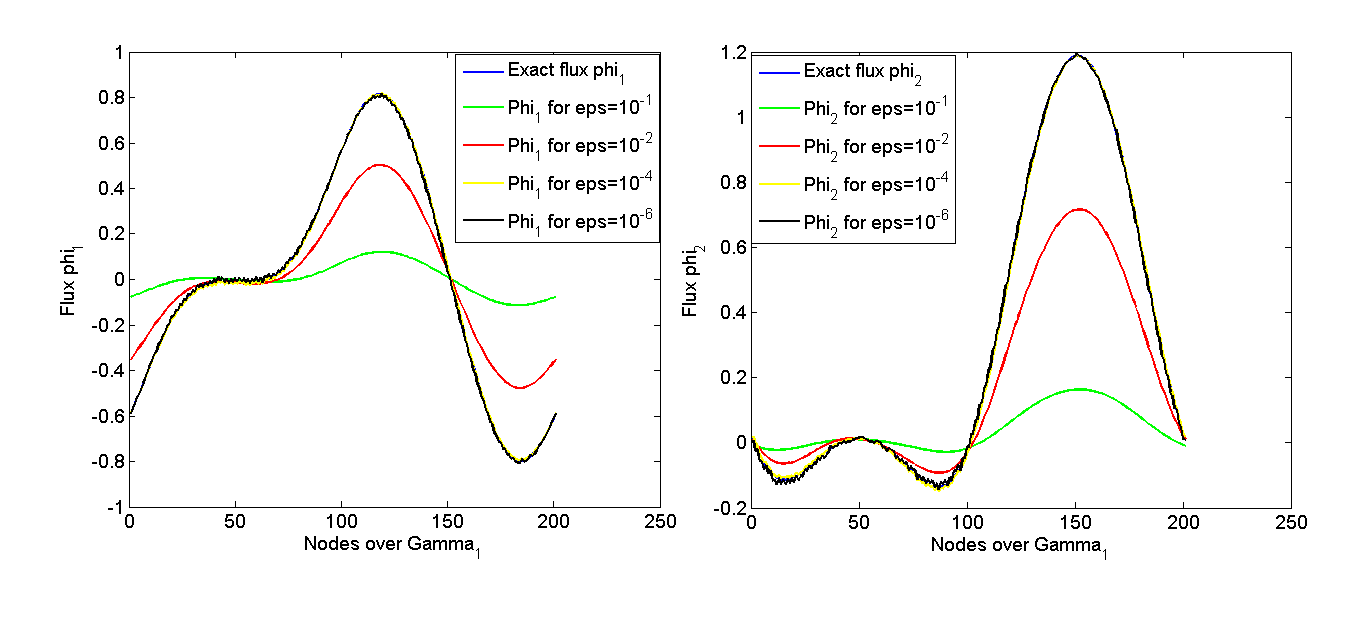}
 \caption{The  solution $\varphi^h_\varepsilon$ for different values of $\varepsilon$}
 \label{eps2}
  \end{figure}
  For $\varepsilon = 10^{-6}$, the approximate problems (\ref{37}) and (\ref{38}) are applied to compute approximate solutions $(\varphi_\varepsilon^h,\zeta_\varepsilon^h)$  of $(\varphi^*, \zeta^*)$ from the Cauchy data $(\psi, \kappa).$ We plot $(\varphi_\varepsilon^h,\zeta_\varepsilon^h)$  in figure   \ref{fig22}  we present the comparison between the exact  and the approximate solutions over the inaccessible boundary $\Gamma$.  As we can see the obtained numerical results are quite satisfactory and  present good approximations of the solutions. 
%% \begin{figure}[h!]
%%\centering
%% \includegraphics[scale=0.55]{}
%% \caption{The approximate solution $\varphi^h_\varepsilon$ and $\zeta^h_\varepsilon$ and exact solution}
%% \label{fig11}
%%  \end{figure}
\begin{figure}[h!]
\centering
 \includegraphics[scale=0.6]{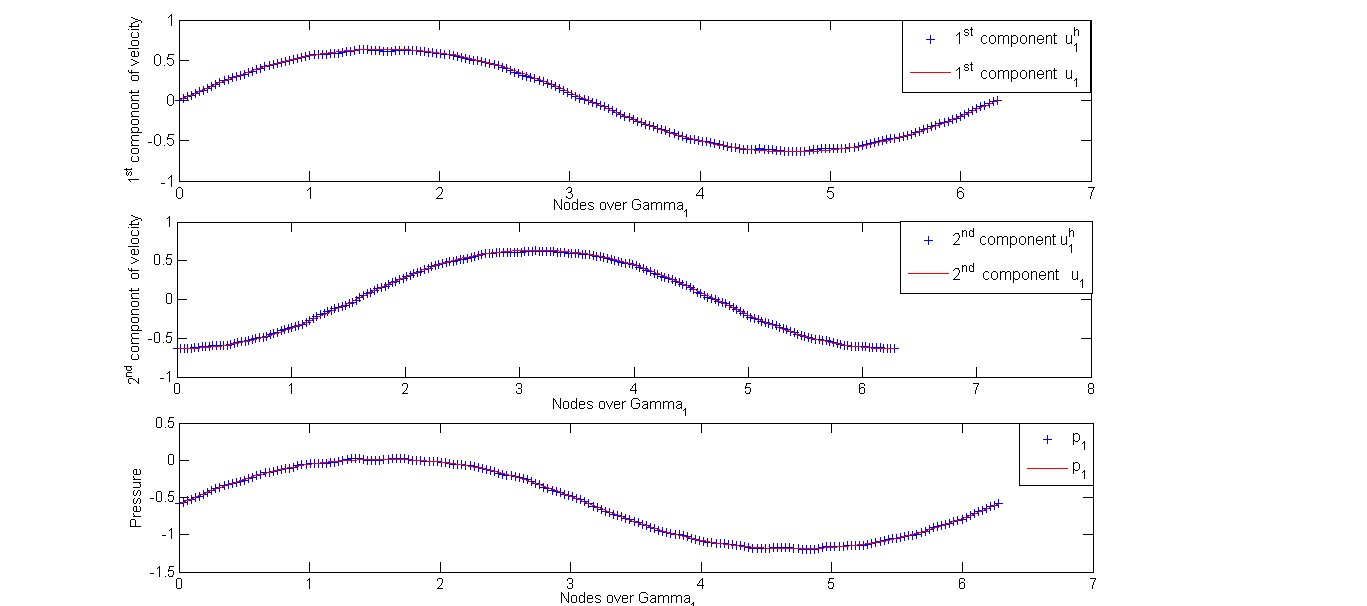}
 \caption{The approximate and exact value of the velocity and pressure on $\Gamma_1$}
\label{fig22}
 \end{figure}
  
 To better evaluate the solution accuracy, we define the $L^2-norm$ relative errors corresponding to solutions $\varphi_\varepsilon^h$, $\zeta^h_\varepsilon,$ and   $(u_1^h,p_1^h)$  as follows:
$$Err_\varphi=\frac{|||\varphi_\varepsilon^h-\varphi^*|||_{0,\Gamma_1}}{\varphi^*}\qquad Err_\zeta=\frac{|||\zeta_\varepsilon^h-\zeta^*|||_{0,\Gamma_1}}{\zeta^*}\qquad Err_u=\frac{|||u_1^h-u^*|||_{0,\Gamma_1}}{u^*}
\quad Err_p=\frac{\|p_1^h-p^*\|_{0,\Gamma_1}}{u^*}
$$   where $(u^*,p^*),$ is the exact solution of $\ref{eq1}$ associated to $(\varphi^*,\zeta^*)$.
 In table \ref{table2}, we examine the numerical convergence of the approximate solution  $(\varphi_\varepsilon^h,\zeta_\varepsilon^h)$ to the exact one $(\varphi_\varepsilon,\zeta_\varepsilon)$ for decreasing value of $h$ (for $\varepsilon=10^{-6}$). We conclude from Table \ref{table2} that the used  approximation method have a good convergence to the  solution $(\varphi,\zeta)$ when $h$ tends to 0.
   \begin{table}[h!]
\centering
\begin{tabular}{|l|c|c|c|c|}
\hline
$h$             & 0.5       & 0.1        & 0.05      &    0.01      \\
\hline
 $  Err_t $   &   0.058    &  0.035        & 0.019 &  0.018 \\
\hline
 $  Err_u  $   &  0.059        &   0.034    & 0.018&  0.017\\
\hline
$Err_p $  &      0.017       &  0.0058    &   0.053 & 0.0015\\
\hline
$Err_\varphi$    &   0.06    &  0.019     &  0.009  &  0.0023\\
\hline
\end{tabular}
\caption{The variation of the errors with respect to $h$ }
\label{table2}
\end{table} 
    
%In the following, all experiments are implemented  with mesh 2087 nodes, 3822 elements and mesh size $h =0.0315.$\\

   %\begin{equation}
%\left\lbrace
%\begin{array}{l}
% ue1=cosh(x)\,sinh(y)\\
%   ue2=-cosh(y)\,sinh(x)\\
% pe=y*cosh(x)-sinh(1)/2\\
%  T_1=cosh(x)\,sinh(y)\\
% T_2=-cosh(y)\,sinh(x)\\
% \psi_1=nu\,(x\,sinh(x)\,sinh(y)+y\,cosh(x)\,cosh(y))-x\,pe\\
% \psi_2=nu\,(-x\,cosh(x)\,cosh(y)-y\,sinh(y)\,sinh(x))-y\,pe\\
%\varphi^*_1=-nu(x\,sinh(x)\,sinh(y)+y\,cosh(x)\,cosh(y))+x\,pe\\
% \varphi^*_2=-nu(-x\,cosh(x)\,cosh(y)-y*sinh(y)\,sinh(x))+y\,pe\\
% t_1=cosh(x)sinh(y)\\
%  t_2=-cosh(y)sinh(x)\\
%
%\end{array}
%\right.
%\end{equation}

%\begin{table}
%\centering
%\begin{tabular}{|l|c|c|c|c|}
%\hline
%$\delta$      & 0  \%      & 5\%      & 10\%   &20\%      \\
%\hline
%$Err_\psi$    &   8.25 e-3      & 2.56 e-2     & 6.56 e-2   &  9.55 e-2\\
%\hline
% $  Err_t $   &   9.46 e-3      &  2.49 e-1        & 6.36 e-1 &  9.30 e-1 \\
%\hline
% $  Err_u  $   &  8.23 e-3        &   2.49 e-1     & 6.36 e-1&  9.30 e-1\\
%\hline
%$Err_p $  &      6.41 e-3       &  2.45 e-2    &   6.40 e-2 & 9.38 e-2\\
%\hline
%\end{tabular}
%\caption{The variation of the errors with respect to $\delta$ }
%\label{table1}
%\end{table}
Finally, in order to verify the stability of the reconstruction model explored here, a uniformly distributed noise with a noise level $\delta = 1\%, 2\%, 3\%, 4\%$ and $ 5\%$, respectively, is added to $(\psi, \kappa)$  to get $(\psi^\delta, \kappa^\delta):$

$$ \psi_1^{\delta}=[1+\delta\cdot(2\,rand(x)-1)]\psi_1   $$
$$ \psi_2^{\delta}=[1+\delta\cdot(2\,rand(x)-1)]\psi_2    $$
$$  \kappa_1^{\delta}=[1+\delta\cdot(2\,rand(x)-1)]\kappa_1;  $$
$$ \kappa_2^{\delta}=[1+\delta\cdot(2\,rand(x)-1)]\kappa_2;   $$
where $rand(x)$ returns a pseudo-random value drawn from a uniform distribution on $[0, 1]$. The experiments are repeated on the same mesh for $\varepsilon = 10^{-6}$. 
\begin{figure}[h!]
\centering
 \includegraphics[scale=0.4]{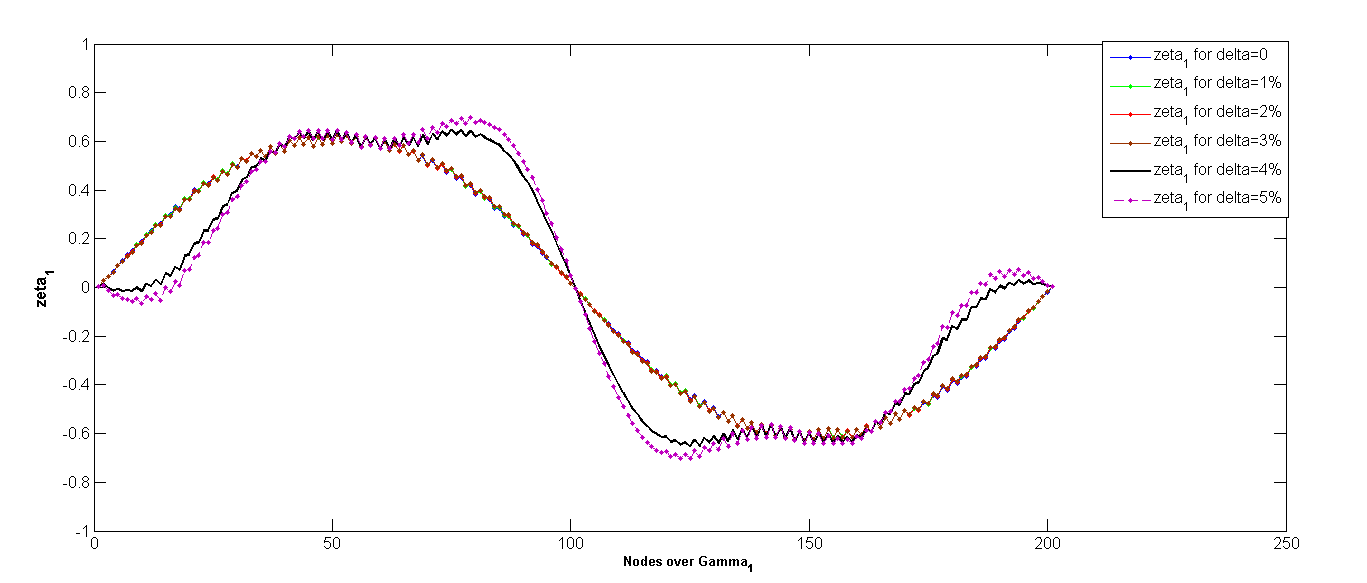}
 \caption{the stability behavior of the solution $\zeta_1$}
 \label{stab1}
 \end{figure}
 \begin{figure}[h!]
\centering
 \includegraphics[scale=0.4]{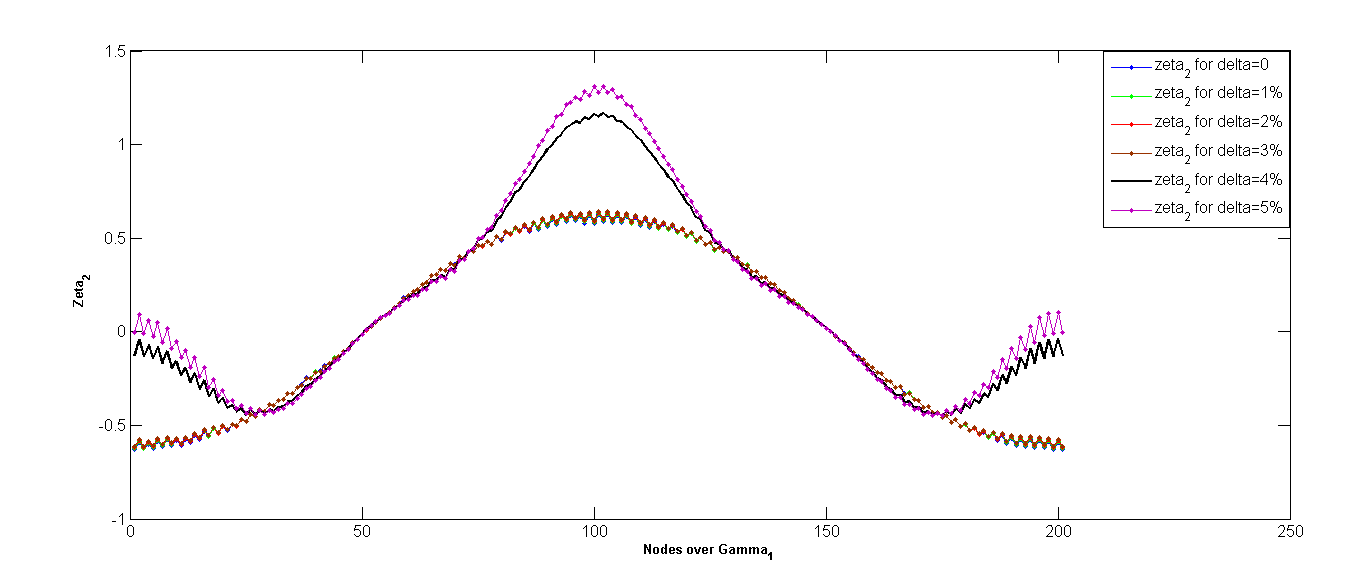}
 \caption{the stability behavior of the solution $\zeta_2$}
 \label{stab2}
 \end{figure}
 \begin{figure}[h!]
\centering
 \includegraphics[scale=0.4]{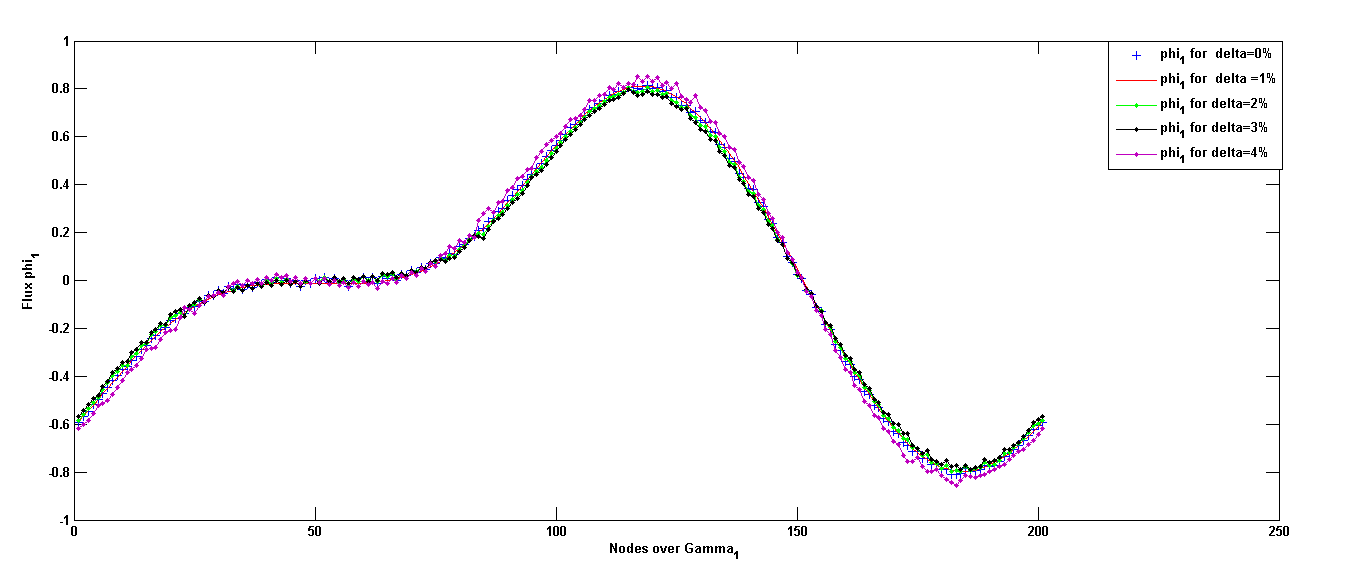}
 \caption{the stability behavior of the solution $\varphi_1$}
 \label{stab3}
 \end{figure}
 \begin{figure}[h!]
\centering
 \includegraphics[scale=0.4]{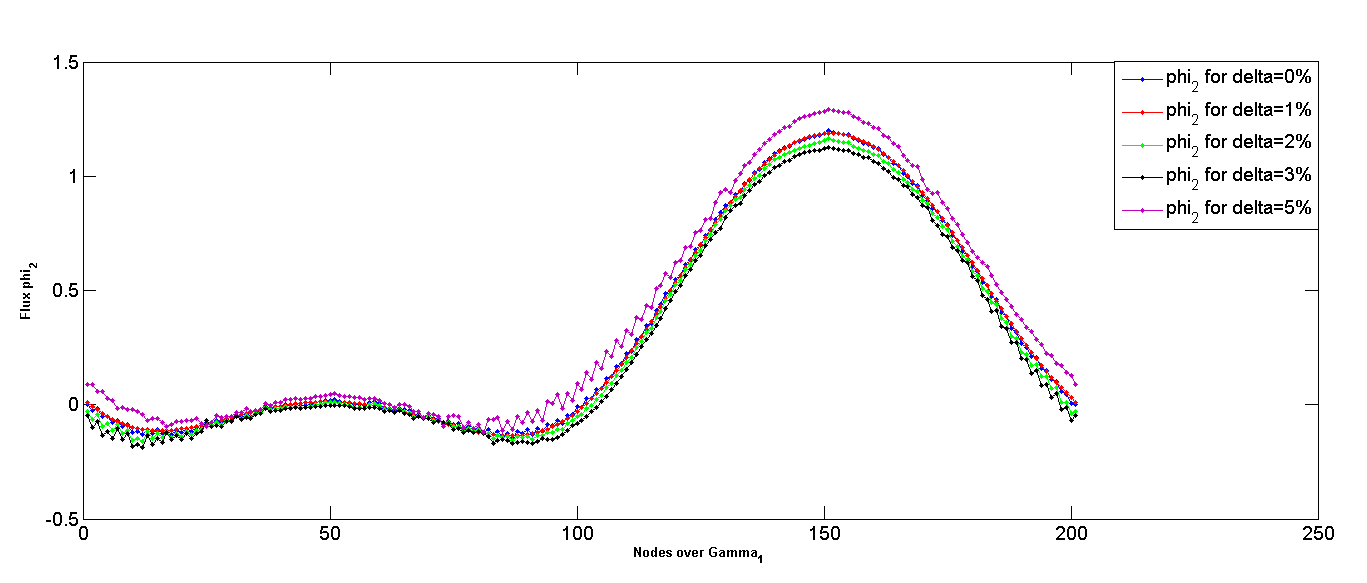}
 \caption{the stability behavior of the solution $\varphi_2$}
 \label{stab4}
 \end{figure}
In Figure \ref{stab1} (resp Figure \ref{stab2}), we present the stability behavior of the solution $\zeta_1$ (resp $\zeta_2$). In Figure \ref{stab3} (resp Figure \ref{stab4}) we present the stability behavior of the solution $\varphi_1$ (resp $\varphi_2$). We conclude from theses figures that, despite the problem is known with its sever instability, the proposed approach produces a convergent and stable numerical solutions with respect to small added noise  into the input data. 

%The variation of the errors on the solutions with respect to $\delta$ are listed in Table \ref{table1}. We conclude from Table \ref{table1} that the proposed approach produces convergent and stable numerical solutions with respect to decreasing the amount of noise added into the input data.
 %and the figure \ref{stab}
%\begin{figure}[h!]
%\centering
% \includegraphics[scale=0.5]{}
% \caption{Exact, Approximate and perturbed velocity and flux on $\Gamma_1$}
% \label{stab}
% \end{figure}
\section{Conclusion}

In this paper, an approach base on a coupled complex boundary method combined with Tikhonov regularization framework  is presented for solving the Cauchy problem  governed by Stokes equation. In this method all boundary conditions are used as parts of a Robin boundary condition. The resulting inverse Cauchy problem is reformulated as minimizing one and performed using Tikhonov regularization approach. Moreover, using the adjoint gradient technic, a simple solver is proposed to compute the regularized solution. Thus, no iteration is needed and the resolution is fast. The obtained theoretical and numerical results show that the proposed approach is feasible, effective and stable.

%\begin{thebibliography}{99}
%\addcontentsline{toc}{chapter}{Bibliographie }

\bibliographystyle{abbrv}
\bibliography{biblio}
\end{document}